\definecolor{cocol}{rgb}{0.7, 0, 0}
\definecolor{mdcol}{rgb}{0, 0.5, 0}
\definecolor{ascol}{rgb}{0, 0, 0.7}
\renewcommand{\cases}[1]{\left\{ \begin{array}{rl} #1 \end{array} \right.}
\newcommand{\smfrac}[2]{{\textstyle \frac{#1}{#2}}}
\def\R{\mathbb{R}}
\def\N{\mathbb{N}}
\def\Z{\mathbb{Z}}
\def\C{\mathbb{C}}
\def\CC{{\rm C}}
\def\pp{\partial}
\def\<{\langle}
\def\>{\rangle}
\def\eps{\varepsilon}
\def\qcf{{\rm qcf}}
\def\qnl{{\rm qnl}}
\def\a{{\rm a}}
\def\c{{\rm c}}
\def\symm{{\rm sym}}
\def\E{\mathcal{E}}
\def\Lqcf{L^\qcf}
\def\Lqcfp{L^\qcf_0}
\def\Lqnl{L^\qnl}
\def\Vqcf{V^\qcf}
\def\Vqnl{V^\qnl}
\def\Vsymm{V^\symm}
\def\Wsymm{\tilde{V}^\symm}
\def\Lamtil{\tilde{\Lambda}}
\def\La{L^\a}
\def\Lc{L^\c}
\def\Lsymm{L^\symm}
\def\PU{P_\Us}
\def\Chi{X}
\def\Lmod{L_1}
\def\Us{\mathcal{U}}
\def\As{\mathcal{A}}
\def\Cs{\mathcal{C}}
\def\Rper{\R^{N}}
\def\Ts{\mathcal{T}}
\DeclareMathOperator{\sinc}{sinc}
\DeclareMathOperator{\cond}{cond}
\DeclareMathOperator{\lspan}{span}
\DeclareMathOperator{\range}{rg}
\DeclareMathOperator{\krn}{ker}
\begin{document}

\title[The Spectrum of the QCF Operator]{The Spectrum of the
Force-Based Quasicontinuum Operator for a Homogeneous Periodic Chain}

\author{M. Dobson} \address{M. Dobson \\ CERMICS - Ecole des Ponts
ParisTech\\
6 et 8 avenue Blaise Pascal\\
Cit\'e Descartes - Champs sur Marne\\
77455 Marne la Vall\'ee Cedex 2\\ France}
\email{dobsonm@cermics.enpc.fr}

\author{C. Ortner}
\address{C. Ortner\\ Mathematical Institute\\
24-29 St Giles' \\ Oxford OX1 3LB \\ UK}
\email{ortner@maths.ox.ac.uk}

\author{A. V. Shapeev}
\address{A. V. Shapeev\\ Section of Mathematics, Swiss Federal Institute of Technology (EPFL), Station 8, CH-1015, Lausanne, Switzerland}
\email{alexander.shapeev@epfl.ch}

\date{\today}

\thanks{This work was supported in part by the EPSRC Critical Mass Programme
``New Frontiers in the Mathematics of Solids'' and the NSF
Mathematical Sciences Postdoctoral Research Fellowship.  This
project was initiated by the authors at the OxMOS Workshop on
Atomistic Models of Solids.}

\subjclass[2000]{65N12, 65N15, 70C20}

\keywords{force-base atomistic/continuum coupling, stability,
spectrum}

\begin{abstract}
  We show under general conditions that the linearized force-based
  quasicontinuum (QCF) operator has a positive spectrum, which is
  identical to the spectrum of the quasinonlocal quasicontinuum (QNL)
  operator in the case of second-neighbour interactions. Moreover, we
  establish a bound on the condition number of a matrix of
  eigenvectors that is uniform in the number of atoms and the size of
  the atomistic region.  These results establish the validity of and
  improve upon recent conjectures (\cite[Conjecture
  2]{Dobson:qcf.stab} and \cite[Conjecture 8]{Dobson:qcf.iter}) which
  were based on numerical experiments.

  As immediate consequences of our results we obtain rigorous
  estimates for convergence rates of (preconditioned) GMRES
  algorithms, as well as a new stability estimate for the QCF method.
\end{abstract}

\maketitle

\section{Introduction}
\label{sec:intro}

Quasicontinuum methods are a prototypical class of multiscale models
that directly couple multiple modeling regions to reduce the
computational complexity of modelling large atomistic systems. These
methods are useful for computing the interaction of localized material
defects such as crack tips or dislocations with long-range elastic
fields of a crystalline material. The force-based quasicontinuum (QCF)
method \cite{curt03,Dobson:2008a,Shenoy:1999a} partitions the material
into two disjoint regions, the atomistic region and the continuum
region.  It assigns forces to the degrees of freedom within each
region using only the respective model, be it atomistic or
continuum. This simplifies the formulation of the method as no special
interaction rules are needed near the atomistic-continuum interface.
The simplicity of mixing forces combined with the lack of spurious
interface forces (so-called ``ghost forces'') make the force-based
method a popular approach, and this technique is widely applied in 
the multiscale literature~\cite{badi08,hybrid_review,curt03,Kohlhoff:1989,Luan:2006,Shenoy:1999a,cadd}.

A potential drawback of the QCF method is that it does not derive from
an energy (as it generally produces a non-conservative field). While
the practical implications of this fact are still under investigation,
it is already clear that the analysis of the QCF method poses
formidable challenges. A series of recent articles has been devoted to
its study~\cite{Dobson:qcf.iter,Dobson:qcf.stab, Dobson:qcf2}. For
example, it was shown in \cite{Dobson:qcf.stab, Dobson:qcf2} that the
linearized QCF operator is not positive definite, and that it is not
uniformly stable (in the number of atoms and the size of the atomistic
region) in discrete variants of most Sobolev spaces.

However, numerical experiments in
\cite{Dobson:qcf.iter,Dobson:qcf.stab} showed some unexpected spectral
properties. Conjecture 2 in \cite{Dobson:qcf.stab} states that the
spectrum of $\ell^2$-eigenvalues of the linearized QCF operator is
identical to that of the operator associated with the quasinonlocal QC
method~\cite{Shimokawa:2004}. This is particularly surprising since
the quasinonlocal QC method is energy-based, and thus indicates that
the QCF operator is diagonalizable and that its spectrum is
real. Conjecture 8 in \cite{Dobson:qcf.iter} states that the condition
number of a matrix of eigenvectors of the QCF operator grows at most
logarithmically. This is an important fact for understanding the
solution of the QCF equilibrium equations by the GMRES method.

The purpose of the present paper is to provide rigorous proofs for
these numerical observations. We define the QCF method and introduce
the necessary notation in Section \ref{sec:prelims}. In Section
\ref{sec:n2} we establish all results for the case of next-nearest
neighbour interactions as in the numerical experiments in
\cite{Dobson:qcf.iter,Dobson:qcf.stab}. Then, in Section \ref{sec:fr}
we extend the results to the more technical case of finite-range
interactions. In the case of finite-range interactions we cannot make
the comparison between the spectra of the QCF and QNL
operators. Instead, we prove that the spectrum of the QCF operator
lies between the spectrum of the atomistic operator and the spectrum
of the continuum operator (Theorem \ref{th:fr:evals}). We note,
moreover, that we were able to construct a matrix of eigenvectors for
the QCF operator whose condition number is bounded uniformly in the
number of atoms and the size of the atomistic region. This result is
in fact stronger than the conjectures made in
\cite{Dobson:qcf.iter}. Finally, in Section \ref{sec:prec}, we analyze
variants of preconditioned QCF operators to obtain rigorous
convergence rates for preconditioned GMRES methods as well as a new
stability estimate.

\section{Formulation of the QCF Method}
\label{sec:prelims}
For the sake of brevity, we will keep the introduction to the
atomistic model and the various flavours of quasicontinuum
approximations to an absolute minimum. We refer
to~\cite{Dobson:2008a,Dobson:qcf.iter,Dobson:qcf.stab,Dobson:qcf2,E:2006,Miller:2003a,emingyang,Ortiz:1995a}
for detailed discussions. Note, in particular, that we have left out
the usual rescaling factor $\eps$. This reduces the complexity of the
notation and is justified since in this paper we are primarily
concerned with algebraic aspects of quasicontinuum operators.

\subsection{Notation for difference operators}
In this section, we summarize the notation and certain elementary results
for some standard finite difference operators with periodic boundary
conditions.

\subsubsection{Periodic domains.}
We identify $\R^{N}$ with periodic infinite sequences as follows:
\begin{displaymath}
\Rper = \big\{ u \in \R^{\Z} : u_{\ell+N} = u_\ell \text{ for all }
\ell \in \Z \big\}.
\end{displaymath}
The $\ell^2$-inner product on $\Rper$, and its associated norm, are
defined as
\begin{displaymath}
\< u, v \> = u^T v = \sum_{\ell = 1}^N u_\ell v_\ell,
\quad \text{and} \quad \|u\| = \sqrt{\<u,u\>}.
\end{displaymath}
We will frequently use a subspace $\Us \subset \Rper$ of mean zero
functions,
\begin{equation*}
\Us = \big\{ u \in \Rper : \< u, e \> = 0 \big\},
\end{equation*}
where $e = (1)_{\ell \in \Z} \in \Rper$.

The orthogonal projection onto $\Us$ is denoted $\PU : \Rper \to
\Rper$,
\begin{equation*}
(\PU u)_\ell = u_\ell - \frac{1}{N} \sum_{k = 1}^N u_k,
\end{equation*}
or, in matrix notation,
\begin{equation}
\label{eq:Pu}
\PU = I - \smfrac{1}{N} e \otimes e.
\end{equation}

\subsubsection{The backward difference operator} 
The difference operator $D: \Rper \to \Rper$ is defined by
\begin{displaymath}
Du_\ell = (Du)_\ell = u_\ell - u_{\ell-1}.
\end{displaymath}
We note that $\range(D) = \Us$ and $\krn(D) = \lspan\{e\}$ where
$\range$ denotes the range and $\ker$ denotes the kernel of an
operator. We also remark that, here and throughout, unless
specifically stated otherwise, we will not distinguish between an
operator and its associated matrix representation in $\R^{N \times
 N}$.

\subsubsection{The discrete Laplace operator}
The second generic operator that we will encounter is the negative
Laplace operator $L : \Rper \to \Rper$,
\begin{displaymath}
Lu_\ell = (Lu)_\ell = -u_{\ell-1} + 2 u_\ell - u_{\ell+1}.
\end{displaymath}
As for the difference operator, $\range(L) = \Us$ and $\krn(L) = \lspan\{e\}$. 

Using summation by parts, we obtain
\begin{displaymath}
\< L u, v \> = \< D u, Dv \>,
\end{displaymath}
which implies that $L = D^T D$ and hence $L=L^T$. Since $Le = 0$, we
have the identities
\begin{equation}
\label{eq:LPu}
L \PU = \PU L = L.
\end{equation}
We also note that $\| L \| \leq 4$, and that this bound is attained
for even $N$, as well as in
the limit $N \to \infty$.

Since $L$ is singular, we also define the modified negative Laplace
operator
\begin{equation}
\label{eq:Lmod}
\Lmod = L + e \otimes e = L + (I-\PU),
\end{equation}
so that $\Lmod u = L u$ if $u \in \Us$ and $\Lmod e = e.$
This operator is invertible and satisfies
\begin{displaymath}
\Lmod^{-1} L = L \Lmod^{-1} = \PU.
\end{displaymath}

\subsubsection{The translation operator} 
The translation operator $T: \Rper \to \Rper$ is defined by
\begin{equation}
\label{eq:defn_T}
Tu_\ell = (Tu)_\ell = u_{\ell+1}.
\end{equation}
$T$ is an orthogonal operator, i.e., $T^T T=I$ and its eigenbasis can
be written explicitly as
\begin{equation}
\label{eq:T-eigenbasis}
T w_k = \lambda_k w_k
, \quad 
\lambda_k = e^{\frac{i 2 \pi k}{N}}
, \quad 
(w_k)_\ell = e^{\frac{i 2 \pi k \ell}{N}}
\quad \text{for } 1\le k\le N.
\end{equation}
The eigenvalues of $T$ are located on the unit circle $\Ts :=
\{t\in\C:\ |t|=1\}$ and, in the limit $N\to\infty$, are dense in
$\Ts$.

We remark that we can write the difference operator $D$ and the
negative Laplace operator as Laurent polynomials in $T$: $D = p_D(T)$
and $L = p_L(t)$ where
\begin{displaymath}
p_D(t) = (1 - t^{-1}), \quad \text{and} \quad p_L(t) = (-t + 2 - t^{-1}).
\end{displaymath}
In general, if $p(t)$ is a polynomial, then the spectrum of the
operator $p(T)$ is $\{p(\lambda_k):\,1\le k\le N\}$, and the
eigenvectors are the same as for $T$. Since $T$ is a normal operator, all
polynomials $p(T)$ are also normal. 

Finally, we note that $Te = e$, which implies that $T$ or any
polynomial of $T$ commutes with $e \otimes e$. In particular, this
implies that all polynomials in $T$ (e.g., $L$, $D$) and the operators
$L_1$ and $\PU$ commute.

\subsection{The linearized atomistic operators}
\label{sec:atomistic_model}
We consider an atomistic model problem with periodic boundary
conditions. We let $\Us$ be the set of admissible displacements of an
$N$-periodic chain: the set of all $N$-periodic displacements with
mean zero. The latter condition is necessary to ensure that the
systems of equations that we consider are well posed. If $F > 0$ is a
fixed {\em macroscopic strain}, then the energy (per period) of the
atomistic chain subject to a displacement $u \in \Rper$ is given by
\begin{equation*}
\E^\a(u) = \sum^{R}_{r = 1} \sum_{\ell = 1}^N 
 \phi\big(rF + (u_{\ell} - u_{\ell-r})\big)
\end{equation*}
where $\phi \in \CC^2(0, +\infty)$ is a pair interaction potential,
for example, a Lennard--Jones or Morse potential, and $R \in \N$, $R
\geq 2$, can be thought of as a discrete cutoff radius. (Note that,
even though we have defined $\E^\a$ for all $u \in \R^{N}$, only $u
\in \Us$ are admitted in the solution of the minimization problem.)

The Cauchy--Born or local quasicontinuum (QCL) approximation of
$\E^\a$ is the functional
\begin{equation*}
\E^\c(u) = \sum_{\ell = 1}^N W\big(F + (u_\ell - u_{\ell-1}) \big)
= \sum_{\ell = 1}^N W\big(F + Du_\ell \big),
\end{equation*}
where $W$ is the {\em Cauchy--Born stored energy function}, $W(s) =
\sum^R_{r=1} \phi(r s)$. 

Our analysis in the present paper concerns properties of the Hessians
$\La = D^2 \E^\a(0)$ and $\Lc = D^2\E^\c(0)$ and the quasicontinuum
operators that we derive from them. For future reference we write out
$\La$ and $\Lc$ explicitly,
\begin{align}
\label{eq:La_explicit}
(\La u)_\ell =~& \sum^R_{r=1} \phi_{rF}'' 
    (- u_{\ell+r} + 2 u_\ell - u_{\ell-r}), \qquad \text{and} \\
\label{eq:Lc_explicit}
(\Lc u)_\ell =~& \sum^R_{r=1} \phi_{rF}'' 
r^2 (- u_{\ell+1} + 2 u_\ell - u_{\ell-1}) = W_F''~(Lu)_\ell, 
\end{align}
where the constants $\phi_{rF}''$ and $W_F''$ are given by
\begin{equation*}
\phi_{rF}'' = \phi''(rF) \quad
\text{and} \quad W_F'' = W''(F) = \sum^R_{r=1} \phi_{rF}'' r^2.
\end{equation*}

We understand both $\La$ and $\Lc$ as linear operators from $\Rper$ to
$\Rper$, defined by the above formulas, but are primarily interested
in their properties on $\Us$. For example, we note that if $\phi_F'' >
0$ and $W_F'' > 0$, then both are positive definite on $\Us$ and in
particular invertible (see \cite[Eq. (2.2) and
Sec. 2.2]{Dobson:qcf.stab} and \cite[Prop. 1 and
Prop. 2]{Dobson:qce.stab} for the next-nearest neighbour case, and
\cite{HudsOrt:a} for finite range), however, both operators have a
non-trivial kernel that contains $e$.  For the continuum operator,
$\Lc,$ the stability condition $W_F''>0$ is sharp, that is, $\Lc$ is
positive definite if and only if $W_F''>0.$  We work to prove our
spectral results on $\Lqcf$ up to this sharp stability criterion.

\subsection{The force-based quasicontinuum method}
\label{defnqcf}
The force-based quasicontinuum (QCF) method is obtained by mixing the
forces from the atomistic and the continuum model. To this end, we define
atomistic and continuum regions $\As$ and $\Cs$ that satisfy
\begin{equation}
\As \cup \Cs = \{1, \dots, N\} \qquad \text{ and } \qquad
\As \cap \Cs = \emptyset.
\end{equation}
We define the QCF forces
\begin{equation*}
F_\ell^\qcf(u) = \cases{
- \frac{\pp \E^\a(u)}{\pp u_\ell}, & \text{if } \ell \in \As, \\[2mm]
- \frac{\pp \E^\c(u)}{\pp u_\ell}, & \text{if } \ell \in \Cs.
}
\end{equation*}
Linearization of the nonlinear QCF operator $F(u) =
(F_\ell^\qcf(u))_{\ell = 1}^N$ at $u = 0$, yields the {\em linear
QCF operator} (or simply, {\em QCF operator}), $\Lqcf : \Rper \to
\Rper$,
\begin{equation}
\label{eq:Lqcf_explicit}
(\Lqcf u)_{\ell} =  \cases{
(\La u)_\ell, & \text{ if } \ell \in \As, \\
(\Lc u)_\ell, & \text{ if } \ell \in \Cs,
}
\end{equation}
which is the focus of our studies in the present
paper. 

Unfortunately, $\Lqcf$ as defined above, does not map $\Us$ to $\Us$,
hence we will normally consider the projected QCF operator (see
\cite[Sec. 2.3]{Dobson:qcf.stab} for more detail)
\begin{displaymath}
\Lqcfp = \PU \Lqcf.
\end{displaymath}

To conclude this section we represent $\Lqcf$ in a more compact
way. By considering the characteristic function of $\As$,
\begin{displaymath}
\chi_\ell = \cases{
1, & \text{ if } \ell \in \As, \\
0, & \text{ if } \ell \in \Cs,
}
\end{displaymath}
and the associated diagonal operator $\Chi : \Rper \to \Rper$,
\begin{displaymath}
(\Chi u)_\ell = \chi_\ell u_\ell,
\end{displaymath}
we can write $\Lqcf$ in the form
\begin{equation}
\label{eq:qcf_sum}
\Lqcf = [1-\Chi] \Lc + \Chi \La = \Lc + \Chi [\La - \Lc].
\end{equation}

\subsection{The quasinonlocal quasicontinuum method}
\label{sec:defnqnl}
The second atomistic/continuum hybrid scheme that will feature
prominently in our investigations is the {\em quasinonlocal
 quasicontinuum (QNL) method}~\cite{Shimokawa:2004}.  We note that
the QNL method is only defined for second-neighbour interaction range
(i.e., $R = 2$).  Extensions to further neighbours
exist~\cite{E:2006,li10,Shapeev:2010a}, but we only use the version up
to second neighbours in this paper.  The QNL method is conservative
with energy functional
\begin{align*}
\E^\qnl(u) =~& \sum_{\ell \in \As \cup \Cs} \phi\big(F + (u_\ell-u_{\ell-1})\big)
+ \sum_{\ell \in \As} \phi\big(2F + (u_{\ell+1}-u_{\ell-1})\big) \\
&+ \sum_{\ell \in \Cs} \frac12 \Big\{ 
\phi\big(2F+2(u_\ell-u_{\ell-1})\big)
+ \phi\big(2F+2(u_{\ell+1}-u_{\ell})\big) \Big\}.
\end{align*}

The linearized QNL operator is the Hessian of $\E^\qnl$ at $u = 0$,
that is, $\Lqnl = D^2 \E^\qnl(0)$. The operator $\Lqnl : \Rper \to
\Rper$, is most easily written in variational form
\cite[Sec. 3.3]{Dobson:qce.stab},
\begin{equation}
\label{eq:Lqnl_vari}
\<\Lqnl u, v\> = W_F'' \sum_{\ell \in \As \cup \Cs} Du_\ell Dv_\ell
- \phi_{2F}'' \sum_{\ell \in \As} L u_\ell L v_\ell.
\end{equation}
Based on this representation one can show that, if $W_F'' > 0$ and
$\phi_F''>0,$  then $\Lqnl$ is positive definite on $\Us$ (see
\cite[Prop. 3]{Dobson:qce.stab} for the case when $\phi_{2F}'' \leq
0$; the case $\phi_{2F}'' > 0$ is trivial).

\section{The $\ell^2$-Spectrum of the Second-Neighbour $\Lqcf$ Operator}
\label{sec:n2}
In \cite{Dobson:qcf.stab} the invertibility of the QCF operator was
investigated analytically and numerically, and several interesting
numerical observations were left as conjectures. Similar observations
were also used in \cite{Dobson:qcf.iter} to study the performance of
iterative solution methods for the QCF operator. In the present
section we rigorously establish some of the most fundamental of these
conjectures in the next-nearest neighbour case. We will then extend the
results, to finite range interactions in Section \ref{sec:fr}.

\subsection{Similarity of $\Lqcfp$ and $\Lqnl$}

In \cite{Dobson:qcf.iter,Dobson:qcf.stab} it was observed in numerical
experiments that the spectra of $\Lqcfp$ and $\Lqnl$ coincide. In this
section we provide a rigorous proof by explicitly constructing a
similarity transformation between $\Lqcfp$ and $\Lqnl$. The main
ideas, after which the proof will be straightforward, are the
following two new representations of the $\Lqcf$ and $\Lqnl$
operators.

\begin{proposition}
Let $R = 2$, then $\Lqcf$ and $\Lqnl$ have, respectively, the
representations
\begin{align}
\label{eq:qcf_rep}
\Lqcf =~& W_F'' L - \phi_{2F}'' \Chi L^2, \qquad \text{and} \\
\label{eq:qnl_rep}
\Lqnl =~& W_F'' L  - \phi_{2F}'' L \Chi L.
\end{align}
\end{proposition}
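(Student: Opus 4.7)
The plan is to prove the two representations by straightforward algebraic manipulation, using the fact that for $R = 2$ both $\La$ and $\Lc$ can be written as polynomials in $L$, and that multiplication by the diagonal operator $\Chi$ is what encodes the atomistic/continuum partition.

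First I would establish the auxiliary identity $\La = W_F'' L - \phi_{2F}'' L^2$. For $R=2$, formula \eqref{eq:La_explicit} splits $\La$ into a nearest-neighbour piece $\phi_F''(-u_{\ell+1} + 2 u_\ell - u_{\ell-1}) = \phi_F'' (Lu)_\ell$ and a second-neighbour piece $\phi_{2F}''(-u_{\ell+2} + 2 u_\ell - u_{\ell-2})$. The latter can be expanded in terms of $L$ and $L^2$: a direct computation (or, more compactly, working with the polynomial representation $L = p_L(T) = -T + 2 - T^{-1}$ and $L^2 = (-T + 2 - T^{-1})^2 = T^{-2} - 4T^{-1} + 6 - 4T + T^2$) yields $-u_{\ell+2} + 2u_\ell - u_{\ell-2} = 4(Lu)_\ell - (L^2 u)_\ell$. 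Substituting and using $W_F'' = \phi_F'' + 4\phi_{2F}''$ gives the desired identity.

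Using \eqref{eq:Lc_explicit}, which already states $\Lc = W_F'' L$, and the compact form \eqref{eq:qcf_sum}, the first representation drops out:
\begin{align*}
\Lqcf = \Lc + \Chi[\La - \Lc] = W_F'' L + \Chi\bigl[-\phi_{2F}'' L^2\bigr] = W_F'' L - \phi_{2F}'' \Chi L^2.
\end{align*}

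For the second representation, I would start from the variational form \eqref{eq:Lqnl_vari}. The first sum is $W_F'' \langle Du, Dv\rangle = W_F'' \langle Lu, v\rangle$ by the summation-by-parts identity $L = D^T D$. For the second sum I rewrite the restriction to $\As$ using $\Chi$: $\sum_{\ell \in \As} Lu_\ell Lv_\ell = \langle \Chi L u, L v\rangle = \langle L \Chi L u, v\rangle$, where the last step uses $L = L^T$. Since $v \in \Rper$ is arbitrary, this gives $\Lqnl = W_F'' L - \phi_{2F}'' L \Chi L$.

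The ``main obstacle'' is really just verifying the polynomial identity $-u_{\ell+2} + 2u_\ell - u_{\ell-2} = 4(Lu)_\ell - (L^2u)_\ell$ correctly; everything else is either already available in the excerpt or a single summation by parts. Once the identity $\La = W_F'' L - \phi_{2F}'' L^2$ is in hand, the symmetric roles of $\Chi$ in the two representations make the contrast between the non-symmetric QCF operator and the symmetric QNL operator completely transparent, which motivates the similarity transform that will follow.
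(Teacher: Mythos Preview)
Your proposal is correct and follows essentially the same route as the paper: derive $\La = W_F'' L - \phi_{2F}'' L^2$ from the identity $-u_{\ell+2}+2u_\ell-u_{\ell-2} = 4(Lu)_\ell - (L^2u)_\ell$, combine it with $\Lc = W_F'' L$ via the decomposition \eqref{eq:qcf_sum} to obtain \eqref{eq:qcf_rep}, and read off \eqref{eq:qnl_rep} from the variational form \eqref{eq:Lqnl_vari} using $L = D^T D$ and $L = L^T$. The paper's proof is the same computation, phrased only slightly differently.
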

\begin{proof}
We begin by noting that, for $R = 2$, the operators $\La$ and $\Lc$
may be written as
\begin{align*}
\La =~& \phi_F'' L + \phi_{2F}'' [4 L - L^2] = W_F'' L - \phi_{2F}'' L^2, \\
\Lc =~& \phi_F'' L + \phi_{2F}'' [4 L] = W_F'' L.
\end{align*}

Using these formulas, the operator $\Lqcf$ (as defined in
\eqref{eq:Lqcf_explicit}) can be written in terms of the atomistic
and the continuum operators~\eqref{eq:qcf_sum} as
\begin{displaymath}
\Lqcf = \Chi \La + [I-\Chi] \Lc 
= \Chi \left[ W_F'' L - \phi_{2F}'' L^2 \right]
+ [I-\Chi] \left[ W_F'' L \right].
\end{displaymath}
From this we immediately obtain \eqref{eq:qcf_rep}.

To rewrite the QNL operator, we note that we can write
\eqref{eq:Lqnl_vari} as
\begin{align*}
\< \Lqnl u, v \> =~& W_F'' \< Du, Dv \> - \phi_{2F}'' \< \Chi Lu, Lv \> \\
=~& W_F'' \< D^T D u, v \> - \phi_{2F}'' \< L \Chi L u, v \>,
\end{align*}
for all $u, v \in \Rper$, and we therefore obtain
\eqref{eq:qnl_rep}.
\end{proof}

\noindent Based on \eqref{eq:qcf_rep} and \eqref{eq:qnl_rep} we will
deduce the similarity of the QCF and QNL operators.  Since $L$ is not
invertible, we introduce the nonsingular operator $\Lmod : \R^{N} \to
\R^{N}$ defined by \eqref{eq:Lmod}.  The following result confirms
Conjecture 2 in \cite{Dobson:qcf.stab} (a related conjecture with
different boundary conditions is Conjecture 6 in
\cite{Dobson:qcf.iter}).

\begin{theorem}[Similarity of $\Lqcfp$ and $\Lqnl$]
\label{thm:Lqcf-Lqnl-similarity}
If $R = 2$ then the operators $\Lqcfp$ and $\Lqnl$ are similar, with
similarity transformation $\Lmod$ defined by \eqref{eq:Lmod}:
\begin{displaymath}
\Lqcfp = \Lmod^{-1} \Lqnl \Lmod.
\end{displaymath}
In particular, the spectra of $\Lqcfp$ and $\Lqnl$
coincide. 
\end{theorem}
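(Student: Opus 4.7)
The plan is to verify the identity $\Lqcfp = \Lmod^{-1} \Lqnl \Lmod$ by purely algebraic manipulation, using the representations \eqref{eq:qcf_rep} and \eqref{eq:qnl_rep} together with the commutation relations already established for $L$, $\Lmod$, and $\PU$. Once the identity holds, $\Lqcfp$ and $\Lqnl$ are conjugate via the invertible operator $\Lmod$, hence similar, and similar operators have identical spectra.

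First I would expand $\Lmod^{-1} \Lqnl \Lmod$ using \eqref{eq:qnl_rep}, obtaining
\begin{equation*}
\Lmod^{-1} \Lqnl \Lmod = W_F'' \Lmod^{-1} L \Lmod - \phi_{2F}'' \Lmod^{-1} L \Chi L \Lmod.
\end{equation*}
Since $L$ is a polynomial in $T$, the final remark of Section~2.1.4 tells us that $L$ commutes with $\Lmod$, so $\Lmod^{-1} L \Lmod = L$. For the second term I would use the two key identities $\Lmod^{-1} L = \PU$ (from the definition of $\Lmod$) and $L \Lmod = L^2$ (which follows from $\Lmod = L + (I-\PU)$ together with $L(I-\PU) = L - L\PU = 0$, by \eqref{eq:LPu}). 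These collapse the second term to $\PU \Chi L^2$, giving
\begin{equation*}
\Lmod^{-1} \Lqnl \Lmod = W_F'' L - \phi_{2F}'' \PU \Chi L^2.
\end{equation*}

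Next I would expand $\Lqcfp = \PU \Lqcf$ using \eqref{eq:qcf_rep}:
\begin{equation*}
\PU \Lqcf = W_F'' \PU L - \phi_{2F}'' \PU \Chi L^2 = W_F'' L - \phi_{2F}'' \PU \Chi L^2,
\end{equation*}
where the second equality uses $\PU L = L$ from \eqref{eq:LPu}. Comparing with the previous display yields the desired identity.

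The manipulation is essentially one-line once the right relations are in hand; there is no real obstacle. The only subtle point worth emphasising is that $\Chi$ is not a polynomial in $T$ and so does not commute with $\Lmod$ or $\PU$ in general — this is exactly why the conjugation by $\Lmod$ is nontrivial and transforms $L \Chi L$ into $\PU \Chi L^2$ rather than leaving it unchanged. The spectral conclusion is then immediate, since $\Lmod$ is invertible and similar matrices share spectra (with multiplicity).
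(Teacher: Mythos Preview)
Your proof is correct and follows essentially the same approach as the paper: both verify the similarity by direct algebraic computation from the representations \eqref{eq:qcf_rep} and \eqref{eq:qnl_rep}, together with the identities $\Lmod^{-1} L = \PU$, $\PU L = L$, and $L\Lmod = L^2$. The only cosmetic difference is that the paper checks the equivalent identity $\Lmod \Lqcfp = \Lqnl \Lmod$ rather than computing $\Lmod^{-1}\Lqnl\Lmod$ and comparing.
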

\begin{proof}
Using formulas \eqref{eq:LPu}, \eqref{eq:qcf_rep}, and
\eqref{eq:qnl_rep}, a straightforward computation yields the desired
identity:
\begin{align*}
\Lmod \Lqcfp =~& \Lmod \PU \Lqcf 
= L\Lqcf  \\
=~& W_F'' LL - \phi_{2F}'' L\Chi L^2 \\
=~& [W_F'' L - \phi_{2F}'' L \Chi L] \Lmod = \Lqnl \Lmod. \qedhere
\end{align*}
\end{proof}

\subsection{Condition number of the $\ell^2$ eigenbasis}
\label{sec:cond_ell2}
Since $\Lqnl$ is self-adjoint, there exists an orthonormal matrix
$\Vqnl \in \R^{N \times N}$, and a diagonal matrix $\Lambda$
containing the eigenvalues of $\Lqnl$, such that
\begin{equation}
\label{eq:qnleig}
\Lqnl \Vqnl = \Vqnl \Lambda.
\end{equation}
Note in particular that $\Lambda$ also contains the zero eigenvalue.
Since the operators $\Lqnl$ and $\Lqcfp$ are similar, there exists also
an invertible operator $\Vqcf \in \R^{N \times N}$ such that
\begin{displaymath}
\Lqcfp \Vqcf = \Vqcf \Lambda.
\end{displaymath}

As suggested by Theorem \ref{thm:Lqcf-Lqnl-similarity}, a possible
choice for the eigenvectors is $L_1^{-1} \Vqnl$, since in that case we
have
\begin{displaymath}
\Lqcfp\,L_1^{-1} \Vqnl = L_1^{-1} \Lqnl \Vqnl = L_1^{-1} \Vqnl\,\Lambda.
\end{displaymath}
However, $\cond(L_1^{-1} \Vqnl) = \cond(L_1^{-1}) = O(N^2)$, which is
much worse than the numerical observations in \cite{Dobson:qcf.stab}
and suggests a poor scaling of the eigenvectors.

To produce a better eigenbasis, it is important to note that the
choice of eigenvectors is not unique even after fixing the ordering as
we are always free to rescale them. This turns out to be a crucial
ingredient in our following construction of an eigenbasis with a
uniformly bounded condition number. The following result is inspired
by Figure 4.2 in \cite{Dobson:qcf.stab} and Conjecture 8 in
\cite{Dobson:qcf.iter}. It does not precisely confirm these, but
establishes a closely related and in fact stronger result.

\begin{theorem}
\label{th:condell2}
Suppose that $R = 2$, then the operator $\Vqcf : \R^{N} \to
\R^{N}$,
\begin{equation}
\label{eq:condell2:Vqcf}
\Vqcf = [W_F'' I - \phi_{2F}'' \PU \Chi L] \Vqnl,
\end{equation}
diagonalizes $\Lqcfp,$ that is, $\Lqcfp \Vqcf = \Vqcf \Lambda$, where
$\Lambda$ is the diagonal matrix of eigenvalues associated with
$\Lqnl$~\eqref{eq:qnleig}.  Moreover, if $W_F'' > 0$ and
$\phi_F''>0$ then $\cond(\Vqcf)$ is bounded above by a
constant that depends on $\phi_{2F}''/W_F''$, but is independent of
$N$ and $\As$.
\end{theorem}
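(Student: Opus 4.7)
The plan is first to establish the diagonalization via an intertwining identity, then to reduce the condition-number estimate to an elementary analysis of the auxiliary operator $M := W_F'' I - \phi_{2F}'' \PU \Chi L$, whose structure will admit an explicit Sherman–Morrison inversion.

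\textbf{Diagonalization.} The crucial observation is that $M$ intertwines $\Lqnl$ and $\Lqcfp$. Using $\PU L = L = L \PU$ together with the representations of the preceding proposition, a direct computation gives
\[
\Lqcfp = \PU\bigl(W_F'' L - \phi_{2F}'' \Chi L^2\bigr) = \bigl(W_F'' I - \phi_{2F}'' \PU \Chi L\bigr) L = M L,
\]
and symmetrically $L M = W_F'' L - \phi_{2F}'' L \Chi L = \Lqnl$. Combining these yields $\Lqcfp M = M L M = M \Lqnl$, so substituting into $\Lqnl \Vqnl = \Vqnl \Lambda$ immediately produces $\Lqcfp \Vqcf = \Vqcf \Lambda$. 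Since $\Vqnl$ is $\ell^2$-orthogonal, right-multiplication by it preserves singular values, hence $\cond(\Vqcf) = \cond(M)$. The bound $\|M\| \le W_F'' + 4|\phi_{2F}''|$ is immediate from $\|\PU\|, \|\Chi\| \le 1$ and $\|L\| \le 4$, so the task reduces to a uniform estimate on $\|M^{-1}\|$.

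\textbf{Structural decomposition of $M$.} Expanding $\PU = I - \tfrac{1}{N} e \otimes e$ and using symmetry of $L$ (so that $e^T \Chi L = (L\chi)^T$), I decompose
\[
M = M_0 + \tfrac{\phi_{2F}''}{N}\, e \otimes L\chi, \qquad M_0 := W_F'' I - \phi_{2F}'' \Chi L,
\]
where the second term is rank one. In the ordering $(\As, \Cs)$ the operator $M_0$ is block upper triangular, with $\Cs$-diagonal block equal to $W_F'' I$ and $\As$-diagonal block a symmetric tridiagonal matrix (block-decoupled across the maximal $\As$-intervals) with diagonal entries $\alpha = W_F'' - 2\phi_{2F}''$ and off-diagonals $\beta = \phi_{2F}''$. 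A short case analysis on the sign of $\phi_{2F}''$, together with $W_F''>0$ and $\phi_F''>0$, gives $\alpha > 0$ and $\alpha - 2|\beta| = \min(\phi_F'', W_F'') > 0$, i.e., strict diagonal dominance; standard estimates for diagonally dominant tridiagonal matrices then yield a bound on $\|M_0^{-1}\|$ uniform in $N$ and $\As$, depending only on the ratios $\phi_F''/W_F''$ and $\phi_{2F}''/W_F''$ (equivalently, only on $\phi_{2F}''/W_F''$, since $\phi_F''/W_F'' = 1 - 4\phi_{2F}''/W_F''$).

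\textbf{Sherman--Morrison correction and main obstacle.} Apply Sherman--Morrison to $M = M_0 + ab^T$ with $a = (\phi_{2F}''/N) e$ and $b = L\chi$. Two short computations make the correction transparent: first, $M_0 e = W_F'' e$ by direct substitution, using $\alpha + 2\beta = W_F''$ on $\As$-rows and the diagonal structure on $\Cs$-rows, so $M_0^{-1} a = (\phi_{2F}''/(NW_F'')) e$; second, $L\chi \in \Us$ (since $L$ maps into $\Us$), so $b^T e = e^T L\chi = 0$. Hence the Sherman--Morrison denominator $1 + b^T M_0^{-1} a$ is exactly $1$, giving the explicit formula
\[
M^{-1} = M_0^{-1} - \frac{\phi_{2F}''}{N W_F''}\, e \otimes \bigl(M_0^{-T} L\chi\bigr).
\]
The main obstacle I anticipate is controlling this correction uniformly in $\As$. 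Using $\|M_0^{-T}\| = \|M_0^{-1}\|$, $\|e\|=\sqrt{N}$, and the sparsity of $L\chi$ (whose entries are nonzero only at the $O(k)$ sites adjacent to an A/C interface, each of magnitude at most $2$, giving $\|L\chi\|^2 \le C k$ with $k \le N$), one obtains
\[
\|M^{-1}\| \le \|M_0^{-1}\|\bigl(1 + C\,|\phi_{2F}''/W_F''|\,\sqrt{k/N}\bigr) \le \|M_0^{-1}\|\bigl(1 + C\,|\phi_{2F}''/W_F''|\bigr).
\]
Combined with the bound on $\|M\|$, this yields a bound on $\cond(M) = \cond(\Vqcf)$ depending only on $\phi_{2F}''/W_F''$ and independent of $N$ and $\As$, as claimed.
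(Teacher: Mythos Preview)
Your argument is correct. The diagonalization step is essentially the paper's computation, packaged more cleanly as the intertwining $\Lqcfp M = M \Lqnl$ (the paper verifies the same identity by expanding both sides).

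For the condition-number bound your route is genuinely different. The paper passes to the transpose $A^T = I - \alpha L \Chi \PU$ and proves a coercivity estimate $\|A^T u\| \ge \gamma_0 \|u\|$ by an orthogonal splitting $S_1 + S_2$ with $S_1 = \Chi[I-\alpha L]\Chi$ and $S_2 = [I-\Chi][I-\alpha L\Chi]$, followed by a weighted Cauchy inequality and an $\epsilon$-optimization (Lemma~\ref{th:fr:condell2_lowerbd}). You instead exploit that $M_0 = W_F'' I - \phi_{2F}''\Chi L$ is block upper triangular with a symmetric, strictly diagonally-dominant tridiagonal $\As$-block, so Gershgorin gives $\|M_0^{-1}\|$ directly; the rank-one correction $\tfrac{\phi_{2F}''}{N} e\otimes L\chi$ then falls out via Sherman--Morrison because $M_0 e = W_F'' e$ and $e^T L\chi = 0$. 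This is more elementary and structural for $R=2$, and even yields the sharper factor $\sqrt{k/N}$ in the correction. The paper's variational approach, on the other hand, is formulated to generalize to arbitrary finite range (Lemma~\ref{th:fr:condell2_lowerbd} allows a general $Z$ in $I-\alpha \PU\Chi Z^T L Z$), where the $\As$-block is no longer tridiagonal and diagonal dominance is not available; your block/Sherman--Morrison argument would need substantial modification in that setting.
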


\begin{remark}
\label{rem:rescale_evecs}
The choice of $\Vqcf$ is motivated by the following calculation.
Starting with the similarity result of
Theorem~\ref{thm:Lqcf-Lqnl-similarity}, we derive
\begin{displaymath} 
  \Lqcfp = \Lmod^{-1} \Lqnl \Lmod = \Lmod^{-1} \Vqnl \Lambda
  (\Vqnl)^T \Lmod,
\end{displaymath}
and we scale the eigenvectors by $\Lambda,$ which gives
\begin{align*}    
  \Lmod^{-1} \Vqnl\Lambda
  =~& \Lmod^{-1} \Vqnl \Lambda [\Vqnl]^T \Vqnl  
  = \Lmod^{-1} \Lqnl \Vqnl \\
  =~& \Lmod^{-1} [W_F'' L - \phi_{2F}'' L \Chi L] \Vqnl 
  = [W_F'' \PU - \phi_{2F}'' \PU \Chi L] \Vqnl.
\end{align*}
This is equivalent to the choice of $\Vqcf$ in~\eqref{eq:condell2:Vqcf} when
restricted to $\Us.$
\end{remark}

\begin{proof}[Proof of Theorem \ref{th:condell2}] 
{\it Step 1: Diagonalization. } In a straightforward computation we
obtain
\begin{align*}
\Lqcfp \Vqcf =~& \big[ W_F'' L - \phi_{2F}'' \PU \Chi L^2 \big] \,
\big[ W_F'' I - \phi_{2F}'' \PU \Chi L \big] \Vqnl \\
=~& \big[W_F'' I - \phi_{2F}'' \PU \Chi L \big] \,
\big[ W_F'' L - \phi_{2F}'' L \Chi L \big] \Vqnl \\
=~& \big[W_F'' I - \phi_{2F}'' \PU \Chi L \big] \,
\Vqnl \Lambda \\
=~& \Vqcf \Lambda.
\end{align*}

{\it Step 2: Estimating $\cond(\Vqcf)$. }  We now assume that 
$W_F'' > 0$ and $\phi''_F > 0.$  To estimate $\cond(\Vqcf)$
we can ignore the positive constant multiple $W_F''$ as well as the
orthonormal matrix $\Vqnl$, that is, we have
\begin{align}
\label{eq:cond_Vqcf_vs_A}
&\cond(\Vqcf) = \cond(A) = \|A\|\,\|A^{-1}\|, \\
\notag 
& \qquad \text{where} \qquad 
 A = I - \alpha \PU \Chi L,
\end{align}
with constant $\alpha=\frac{\phi_{2F}''}{W_F''}$, and the convention
$\|A^{-1}\| = +\infty$ if $A$ is not invertible. We note that the
condition $\phi_F'', W_F'' > 0$ implies that $\alpha < 1/4$.

Elementary estimates give the upper bound
\begin{equation}
\label{eq:condell2:upperbnd}
\| A \| \leq 1 + |\alpha| \|\PU\|\|\Chi\| \|L\|
= 1 + 4 |\alpha|.
\end{equation}
We similarly get the lower bound 
\begin{equation}
\label{eq:condell2:lowerbnd}
\| A u \| \geq 1 - |\alpha| \|\PU\|\|\Chi\| \|L\|
= 1 - 4 |\alpha| \qquad \text{ for all } \| u\| = 1,
\end{equation}
which gives an estimate for $\|A^{-1}\|$ whenever $W''_F + 4
\phi''_{2F} >0.$ In the following we prove a bound for $\|A^{-1}\|$
that holds whenever $W''_F >0,$ that is, up to the sharp stability 
limit, which is a more
involved result.

To estimate $\|A^{-1}\|$ we use the facts that (i) $\|A^{-1} \| =
\|A^{-T}\|$, and (ii) if
\begin{displaymath}
\| A^T u \| \geq \gamma_0 \|u\| \qquad \forall u \in \R^{N},
\end{displaymath}
for some constant $\gamma_0 > 0$, then $A^T$ is invertible and
$\|A^{-T}\| \leq 1 / \gamma_0$. In Lemma \ref{th:condell2_lowerbd} we
establish precisely this fact, assuming that $\alpha < 1/4$, with a
constant $\gamma_0$ that depends only on $\alpha$ but not on $N$ or
$\As$.
\end{proof}

\noindent A generalization of the following technical lemma used in
the previous proof will also be required in the finite range
interaction case. It follows from Lemma \ref{th:fr:condell2_lowerbd}
by choosing $Z = I$.

\begin{lemma}
\label{th:condell2_lowerbd}
Let $\alpha < 1/4$, then there exists a constant $\gamma_0 > 0$, which
depends on $\alpha$ but is independent of $N$ and of $\As$, such that
\begin{displaymath}
\big\| [I - \alpha \PU \Chi L]^T u \big\| \geq \gamma_0 \|u\| 
\qquad \forall u \in \R^{N}.
\end{displaymath}
\end{lemma}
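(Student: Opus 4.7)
The plan is to reduce the estimate on $\R^{N}$ to an inequality on $\Us$, and then exploit the block structure of the operator relative to the $\As/\Cs$ splitting.

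As a first step I would note that $[I - \alpha \PU \Chi L]^T = I - \alpha L \Chi \PU$ and $\range(L) = \Us$, so that for any $u = u_0 + \bar u\, e$ with $u_0 = \PU u \in \Us$ one has
\begin{displaymath}
(I - \alpha L \Chi \PU) u = (u_0 - \alpha L \Chi u_0) + \bar u\, e,
\end{displaymath}
which is an $\ell^2$-orthogonal decomposition into $\Us$- and $\lspan\{e\}$-parts. It therefore suffices to prove $\|(I - \alpha L \Chi) u_0\| \geq \gamma_1 \|u_0\|$ uniformly for $u_0 \in \Us$.

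Next I would set $u_0^{\As} := \Chi u_0$, $u_0^{\Cs} := (I-\Chi) u_0$, and $v := (I - \alpha L \Chi) u_0 = u_0 - \alpha L u_0^{\As}$, and split $v$ by multiplying by $\Chi$ and by $I - \Chi$:
\begin{align*}
\Chi v &= u_0^{\As} - \alpha \Chi L u_0^{\As}, \\
(I-\Chi) v &= u_0^{\Cs} - \alpha (I-\Chi) L u_0^{\As}.
\end{align*}
Restricting the first identity to the $\As$-indices recognises $\Chi L \Chi$ as the principal submatrix $L_{\As,\As}$ of $L$, so that $(\Chi v)|_{\As} = (I - \alpha L_{\As,\As})\, u_0^{\As}|_{\As}$.

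The key step is then the uniform inversion of $I - \alpha L_{\As,\As}$. Since $L$ is symmetric positive semidefinite with $\|L\| \leq 4$, Cauchy interlacing forces the spectrum of $L_{\As,\As}$ into $[0,4]$, and the hypothesis $\alpha < 1/4$ guarantees that $I - \alpha L_{\As,\As}$ is symmetric positive definite with $\|(I - \alpha L_{\As,\As})^{-1}\| \leq C_1 := 1/\min\{1, 1-4\alpha\}$, a bound depending only on $\alpha$. Consequently $\|u_0^{\As}\| \leq C_1 \|\Chi v\|$, and the second identity together with $\|(I-\Chi) L\| \leq 4$ gives $\|u_0^{\Cs}\| \leq \|(I-\Chi) v\| + 4|\alpha| C_1 \|\Chi v\|$. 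Squaring and adding, using $\|v\|^2 = \|\Chi v\|^2 + \|(I-\Chi) v\|^2$, produces the desired uniform bound $\|u_0\|^2 \leq C(\alpha) \|v\|^2$. The only substantive ingredient is the uniform positivity of $I - \alpha L_{\As,\As}$, which rests entirely on the Cauchy interlacing fact above; everything else is routine $\As/\Cs$ bookkeeping, so I do not anticipate any serious obstacle.
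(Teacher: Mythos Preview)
Your argument is correct and follows essentially the same route as the paper's proof (which is obtained as the special case $Z=I$ of Lemma~\ref{th:fr:condell2_lowerbd}): both reduce to $\Us$ via the orthogonal decomposition $u = \PU u + (I-\PU)u$, then split according to $\Chi$ and $I-\Chi$, and both rest on the same key spectral fact that the principal block $\Chi L \Chi \cong L_{\As,\As}$ has spectrum in $[0,4]$, so that $I-\alpha L_{\As,\As}$ is uniformly coercive for $\alpha<1/4$.

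The one genuine difference is in the final assembly. The paper bounds $\|v\|^2=\|\Chi v\|^2+\|(I-\Chi)v\|^2$ from below in terms of $\|\Chi u_0\|^2$ and $\|(I-\Chi)u_0\|^2$ by introducing a free parameter $\epsilon\in(0,1)$ in a weighted Cauchy inequality and then optimizing over $\epsilon$; this yields the explicit formula for $\gamma_0$ recorded in the remark following the lemma. Your version instead inverts the $\As$-block directly to control $\|u_0^{\As}\|$ by $\|\Chi v\|$, and then back-substitutes via the triangle inequality to control $\|u_0^{\Cs}\|$. This is slightly more elementary (no optimization step) but produces a less sharp constant; for the purposes of the lemma either is perfectly adequate.
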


\begin{remark}
From the proof of Lemma \ref{th:fr:condell2_lowerbd} we see that, in
the case $\phi_{2F}'' \leq 0$, the constant $\gamma_0$ is explicitly
given by
\begin{equation}
  \label{eq:condell2:explicit_gamma0}
  \gamma_0^2 = 1 + 8 \alpha^2 - 4 \sqrt{ \alpha^2
    + 4 \alpha^4},
\end{equation}
where $\alpha = \phi_F'' / W_F''$, and the resulting condition
number estimate by
\begin{displaymath}
  \cond(\Vqcf; \Us)^2 \leq \frac{(1 + 4 |\alpha|)^2}{
    1 + 8 \alpha^2 - 4 \sqrt{ \alpha^2 + 4 \alpha^4}} =: c(\alpha)^2.
\end{displaymath}
If $\phi_{2F}''$ is moderate but $W_F'' \to 0$, then $\alpha \to
-\infty$. A brief calculation shows that in this limit $c(\alpha)$
behaves asymptotically like
\begin{displaymath}
  c(\alpha) \sim 2^{5/2} \alpha^2 + O(|\alpha|^{3/2}) \qquad \text{as} \quad
  \alpha \to \infty. \qedhere
\end{displaymath}
\end{remark}

\section{Finite Range Interactions}
\label{sec:fr}
In this section we extend the results of Section \ref{sec:n2} to the
case of finite range interactions (i.e., with arbitrary finite $R$).
We begin by stating a simplified formulation of our main results.  We
note, however, that our actual results are more general than the
following theorem. In particular, we can replace the assumption
$\phi_{rF}'' \leq 0$, $r \geq 2$, by a more general condition. This
theorem will be proved in Section \ref{sec:fr:nonpos}.

\begin{theorem}
\label{th:fr:mainthm_simple}
Suppose that $\phi_{rF}'' \leq 0$ for $2 \leq r \leq R$ and that
$\phi_{RF}'' \neq 0$.

(i) There exists an operator $\Vqcf : \Rper \to
\Rper$, which diagonalizes $\Lqcfp$, that is,
\begin{displaymath}
  \Lqcfp \Vqcf = \Vqcf \Lambda,
\end{displaymath}
where $\Lambda$ is a diagonal real matrix of eigenvalues
$(\lambda_j)_{j = 1}^{N}$.

(ii) If $W_F'' > 0$ then $\Vqcf$ is invertible and $\cond(\Vqcf)$ is
bounded above by a constant that depends on the coefficients
$\phi_{rF}''$, $r = 1, \dots, R$, but is independent of $N$ and $\As$.

(iii) If $W_F'' > 0$ and if the eigenvalues are ordered, then
\begin{displaymath}
  \lambda_j^\c \leq \lambda_j \leq \lambda_j^\a,
\end{displaymath}
where $(\lambda_j^\c)_{j=1}^{N}$ and $(\lambda_j^\a)_{j=1}^{N}$
denote the ordered eigenvalues of, respectively, $\Lc$ and $\La$. In
particular, we have the bounds
\begin{displaymath}
  \lambda_1 = 0, \quad \text{and} \quad
  4 W_F'' \sin^2\big(\smfrac{\pi}{N}\big) \leq \lambda_j \leq 4 \phi_F''
  \quad \text{for } j = 2, \dots, N.
\end{displaymath}
\end{theorem}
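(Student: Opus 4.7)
The plan is to parallel the strategy of Section \ref{sec:n2}, replacing the pair $(\Lqnl, \Lmod)$ by suitable operators built from the polynomial structure of $\La - \Lc$. First I would derive a polynomial representation of $\Lqcf$ generalizing \eqref{eq:qcf_rep}. Since both $\La$ and $\Lc$ are polynomials in $T$, hence in $L$, and since the identity $q_r(L) := 2I - T^r - T^{-r} = r^2 L + L^2 p_r(L)$ holds for some polynomial $p_r$ (a Chebyshev reduction of $2 - 2\cos(r\theta)$ in the variable $2-2\cos\theta$), one obtains
\begin{equation*}
\Lqcf = W_F'' L + \Chi\, L^2 R(L), \qquad R(L) := \sum_{r=1}^R \phi_{rF}'' p_r(L).
\end{equation*}
The Chebyshev inequality $q_r(L) \le r^2 L$, equivalent to $p_r(L) \le 0$ on $\Us$, combined with the sign hypothesis yields $R(L) \ge 0$, while $\phi_{RF}'' \ne 0$ rules out degeneracy and gives strict positivity on $\Us$, so that $R(L)^{1/2}$ is well-defined through the functional calculus on $T$.

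Next I would introduce a symmetric reference operator to play the role of $\Lqnl$. Setting $M := L\,R(L)^{1/2}$ (symmetric, commuting with every polynomial in $T$, annihilating $e$, and satisfying $M^2 = L^2 R(L) = \La - \Lc$), define
\begin{equation*}
\Lsymm := W_F'' L + M\,\Chi\, M, \qquad \text{which satisfies} \qquad \Lc \le \Lsymm \le \La \quad \text{on } \Us,
\end{equation*}
since $0 \le \Chi \le I$ as diagonal operators. Similarity with $\Lqcfp$ is then established via the operator $M_1 := M + (I-\PU)$, the analog of $\Lmod$; a short computation using $[M,L] = 0$ and $M\PU = M$ yields
\begin{equation*}
M_1 \Lqcfp = M\Lqcf = W_F'' LM + M\Chi M^2 = (W_F'' L + M\Chi M)M = \Lsymm M_1.
\end{equation*}
Parts (i) and (iii) then follow quickly: (i) because $\Lsymm$ is self-adjoint, hence diagonalizable with real spectrum, a property that transfers to $\Lqcfp$ via the similarity; (iii) by applying the Courant-Fischer min-max principle to the sandwich above, together with the eigenvalues of $\Lc = W_F'' L$ on $\Us$ and the estimate $\|\La\| \le 4\phi_F''$ (which itself uses the sign hypothesis via $\La \le \phi_F'' L$).

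For part (ii), the naive similarity basis $M_1^{-1}\Vsymm$ scales poorly with $N$, just as observed in the paragraph preceding \eqref{eq:condell2:Vqcf}. Following Remark \ref{rem:rescale_evecs}, I would instead absorb a factor of $\Lambda$ into the basis, producing an explicit $\Vqcf$ of the schematic form $[W_F'' I - \PU \Chi\, q(L)] \Vsymm$ for a polynomial $q$ derived from $R$. Estimating $\cond(\Vqcf)$ then reduces, exactly as in \eqref{eq:cond_Vqcf_vs_A}, to norm bounds on an operator $A = I - \PU \Chi\, q(L)$. The upper bound on $\|A\|$ is routine from $\|\PU\|,\|\Chi\| \le 1$ and the spectrum of $L$; the uniform lower bound on $\|A^{-T}\|$, which must hold up to the sharp stability threshold $W_F'' > 0$, is the main technical obstacle and will be the substance of the auxiliary Lemma \ref{th:fr:condell2_lowerbd}. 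A Neumann-series argument yields this lower bound only in a sub-optimal regime, so a sharper algebraic argument, generalizing the one behind Lemma \ref{th:condell2_lowerbd}, is required.
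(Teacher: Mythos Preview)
Your approach is correct and takes a genuinely different route from the paper. The paper factors the Laurent polynomial $b(t)$ representing $\La-\Lc$ via a Riesz--F\'ejer factorization $b(t)=p(t)p(1/t)$ (Lemma~\ref{th:symmetric-product}), yielding $\La-\Lc=YY^T$ with $Y=p(T)$ a generally \emph{non-symmetric} polynomial in the shift $T$; the similarity transform is then $\Lmod Y_1$ where $Y=LY_1$. You instead take the symmetric spectral square root $M=L\,R(L)^{1/2}$ via functional calculus, bypassing the factorization lemma entirely and producing an orthogonally equivalent $\Lsymm$. Your route is more elementary for this particular theorem; the paper's polynomial factorization is set up more generally so as to cover also $b(t)\le 0$ (Remark~\ref{rem:fr:neg_b}) and even sign-changing $b$, whereas the square-root construction needs $\La-\Lc\ge 0$ from the outset.

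Two imprecisions worth flagging. First, the rescaled eigenbasis that actually emerges from your $M_1$-similarity and Remark~\ref{rem:rescale_evecs} is $[W_F''R(L)^{-1/2}+\PU\Chi\,LR(L)^{1/2}]\Vsymm$, which involves $R(L)^{\pm 1/2}$ and is not of the form $[W_F''I-\PU\Chi\,q(L)]\Vsymm$ for a polynomial $q$. This does no real damage: Lemma~\ref{th:fr:condell2_lowerbd} still applies with $Z=R(L)^{1/2}$ (which commutes with $\PU$ and has $\|Z\|^2=\|R(L)\|$), and the residual factor $R(L)^{-1/2}$ contributes only a bounded $\cond(R(L)^{1/2})$ to the estimate. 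Second, your assertion that $\phi_{RF}''\ne 0$ ``rules out degeneracy and gives strict positivity on $\Us$'' needs an actual argument---this is the content of the paper's Lemma~\ref{th:fr:nonpos_implicit_bound}, and requires checking that $p_r$ is strictly negative on all of $[0,4]$ (including the endpoint $s=0$, where $R(L)^{1/2}$ must act on $e$), not merely $p_r\le 0$ from the Chebyshev inequality.
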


\subsection{Symmetrization of $\Lqcf$}
\label{sec:fr:symm}
We recall from \eqref{eq:qcf_sum} the definition of the finite range
QCF operators,
\begin{displaymath}
\Lqcf = \Lc + \Chi [\La - \Lc],
\quad \text{and} \quad
\Lqcfp = \Lc + \PU \Chi [\La - \Lc],
\end{displaymath}
where $\La$ and $\Lc$ are, respectively, the atomistic and the
continuum operators defined in \eqref{eq:La_explicit} and
\eqref{eq:Lc_explicit}. In terms of the translation operator defined
in \eqref{eq:defn_T}, we can express $\La$ and $\Lc$ as
\begin{equation}
\label{eq:La-definition}
\Lc = \sum_{r=1}^{R} r^2 \phi''_{rF} \big[-T + 2I - T^{-1} \big],
\quad \text{and} \quad 
\La = \sum_{r=1}^{R} \phi''_{rF} \big[-T^r + 2I - T^{-r} \big].
\end{equation}

To show that $\Lqcfp$ is diagonalizable, we aim to construct a matrix
$Y$ such that $Y \Lqcf Y^{-1}$ is symmetric. We first note that we can
write the difference $\La - \Lc$ as a Laurent polynomial of the
translation operator $T$:
\begin{align*}
\La-\Lc 
=~& \sum_{r=2}^{R} \phi''_{rF} \big[(-T^r+2I-T^{-r}) - r^2 (-T+2 I-T^{-1})\big] \\
=~& \sum_{r=2}^{R} \phi''_{rF} \big[(T^r-I)(T^{-r}-I) - r^2 (T-I) (T^{-1}-I)\big].
\end{align*}
Thus, if we define the Laurent polynomial
\begin{equation}
\label{eq:b-definition}
b(t) = \sum_{r=2}^{R} \phi''_{rF} \big[(t^r-1)(t^{-r}-1)-r^2 (t-1) (t^{-1}-1)\big]
\quad \text{for } t \in \C \setminus \{0\},
\end{equation}
then we obtain $\La - \Lc = b(T)$. The crucial observation is the
following: If we can factorize $b(t)$ as $b(t) = p(t) p(1/t)$ then we
can choose $Y = p(T)$ to obtain
\begin{equation}
\label{eq:fr:YYt}
\La-\Lc = b(T) = p(T) p(T^{-1}) = p(T) p(T^{T}) = Y Y^T = Y^T Y,
\end{equation}
and we immediately obtain
\begin{equation}
\label{eq:fr:symm_Lqcf}
Y \Lqcf
= Y \Lc + Y \Chi \big[Y^T Y\big]
= \big[\Lc + Y \Chi Y^T\big] Y.
\end{equation}
Here, we have used the fact that all operators that are polynomials in $T$
commute.  A similar result holds also for $\Lqcfp$, though this would
require further preparation. If $Y$ were invertible (this will turn
out to be false), then this would imply that $\Lqcf$ is similar to a
symmetric, hence, normal matrix, and is therefore diagonalizable.

The desired polynomial factorization result is essentially the
Riesz--F\`{e}jer factorization lemma \cite[Sec. 53]{RieszNagy}, which
we state and prove in a slightly more general form.

\begin{lemma}
\label{th:symmetric-product}
Let $b(t)$ be a Laurent polynomial with real coefficients such that
$b(t) = b(1/t)$. Then there exists a polynomial $p(t)$ such that
$b(t) = p(t) p(1/t)$. 

If, in addition, $b(t) \ge 0$ for all $t \in \Ts := \{ t \in \C :
|t| = 1\}$, then $p(t)$ can be chosen to have real coefficients.
\end{lemma}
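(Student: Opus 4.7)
The plan is to reduce the Laurent-polynomial statement to a statement about an ordinary \emph{reciprocal} (palindromic) polynomial, and then perform a root-pairing argument. Concretely, let $n$ be the smallest integer such that $b(t) = \sum_{k=-n}^{n} a_k t^k$ with $a_{\pm n} \neq 0$ (the case $b \equiv 0$ is trivial, with $p = 0$). Set $q(t) := t^n b(t)$, which is a polynomial of degree $2n$ with nonzero constant term $a_{-n} = a_n$. The assumption $b(t) = b(1/t)$ translates into the symmetry of the coefficients, $a_k = a_{-k}$, equivalently
\begin{equation*}
q(t) = t^{2n} q(1/t).
\end{equation*}
The desired factorization $b(t) = p(t)\,p(1/t)$ with $\deg p = n$ is equivalent, after multiplying through by $t^n$, to $q(t) = p(t)\,\tilde p(t)$, where $\tilde p(t) := t^n p(1/t)$ is the reversal of $p$. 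So it suffices to produce such a factorization of $q$.

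Next, I would pair up the roots of $q$. Since $q(0) \neq 0$, the reciprocal identity forces the multiplicity of $\alpha \in \C \setminus \{0\}$ as a root of $q$ to equal that of $1/\alpha$. Hence the multiset of roots decomposes into genuine pairs $\{\alpha, 1/\alpha\}$ with $\alpha \neq \pm 1$, plus possibly self-paired roots at $\alpha = \pm 1$. A short calculation (differentiating $q(t) = t^{2n} q(1/t)$ and evaluating at $t = \pm 1$, or equivalently looking at the reduced polynomial after dividing out $(t\mp 1)$) shows that the multiplicity of $\pm 1$ in $q$ is automatically even. Define $p(t)$ by choosing, for each genuine pair, the root $\alpha$ (with its full multiplicity) and, for each self-paired root of multiplicity $2m$, the factor $(t\mp 1)^m$; fix the leading coefficient of $p$ so that $p(t)\tilde p(t)$ and $q(t)$ agree on leading terms. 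By construction $p(t)\tilde p(t)$ and $q(t)$ have the same roots with the same multiplicities and the same leading coefficient, hence are equal.

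For the second assertion, suppose $b$ has real coefficients and $b(t) \geq 0$ for $t \in \Ts$. Then $q$ has real coefficients, so its roots are invariant under complex conjugation in addition to the involution $\alpha \mapsto 1/\alpha$. Group the nonreal, non-unimodular roots into quadruples $\{\alpha, \bar\alpha, 1/\alpha, 1/\bar\alpha\}$ and place $\{\alpha, \bar\alpha\}$ (with their common multiplicity) into $p$; group the real roots of modulus $\neq 1$ into pairs $\{\alpha, 1/\alpha\}$ and put $\alpha$ into $p$. Each such choice contributes a real factor to $p$. The delicate case is the unimodular roots. The key observation here is that if $\alpha \in \Ts$ is a root of $q$ then $b(\alpha) = 0$, and since $b \geq 0$ on $\Ts$ with $b \in \CC^\infty(\Ts)$, the zero must be of even order along $\Ts$; parametrising $t = e^{i\theta}$ shows this forces the multiplicity of $\alpha$ in $q$ to be even. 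For a nonreal unimodular pair $\{\alpha, \bar\alpha\} = \{\alpha, 1/\alpha\}$ of even multiplicity $2m$, include $((t-\alpha)(t-\bar\alpha))^{m}$ in $p$ (a real factor), and likewise for $\alpha = \pm 1$. Taking the leading coefficient of $p$ to be real (up to an overall sign, justified by $q$ having the correct sign of leading coefficient, which follows from $b \geq 0$ on $\Ts$ combined with integrating against the Haar measure), one checks $p$ has real coefficients.

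The main obstacle is the uniform treatment of self-paired roots: roots at $\pm 1$ in the first part, and all unimodular roots in the positive part. Both require showing evenness of multiplicity, and the positivity hypothesis enters precisely to make this work on $\Ts$; without it, a unimodular simple root of $q$ could not be split evenly between $p$ and $\tilde p$ with real coefficients. Everything else is bookkeeping of roots and leading coefficients.
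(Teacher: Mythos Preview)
Your proposal is correct and follows essentially the same root-pairing argument as the paper: convert $b$ to an ordinary reciprocal polynomial, pair each root $\alpha$ with $1/\alpha$ (noting that the self-paired roots $\pm 1$ automatically have even multiplicity), and in the nonnegative case exploit that every unimodular root has even multiplicity so that conjugate pairs can be split evenly into real factors. The one point to tighten is your justification that the overall scalar can be taken real and positive---rather than the Haar-measure remark, simply observe that once the root factors are matched you have $b(t)=c\,|P(t)|^{2}$ on $\Ts$, so $b\geq 0$ forces $c\geq 0$ directly.
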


\begin{remark}
\label{rem:fr:neg_b}
In Lemma \ref{th:symmetric-product}, if $b(t) \leq 0$ on $\Ts$, then
it can be factorized as $b(t) = - p(t) p(1/t)$, where $p(t)$ has real
coefficients.
\end{remark}

\begin{proof}[Proof of Lemma \ref{th:symmetric-product}]
 Let $\alpha t^{-R}$ be the leading term in $b(t)$ with $\alpha \neq
 0$, then $a(t) := t^R b(t)$ is a polynomial with $a(0) \neq
 0$. Moreover, $a$ and $b$ share the same roots, which we collect
 into a set $\Lambda$, so that
\begin{displaymath}
  b(t) = \alpha t^{-R} \prod_{\lambda\in\Lambda} (t-\lambda)^{m(\lambda)},
\end{displaymath}
where $m(\lambda)$ denotes the multiplicity of $\lambda$.

Next, we define the auxiliary polynomial
\begin{displaymath}
  p_1(t) = \prod_{\lambda\in\Lambda, |\lambda|<1} (t-\lambda)^{m(\lambda)}.
\end{displaymath}
Since $b(t)$ has real coefficients it follows that $\lambda \in
\Lambda$ if and only if $\bar\lambda \in \Lambda$, and $m(\lambda) =
m(\bar\lambda)$, and hence $p_1(t)$ also has real coefficients. We also note that
\begin{displaymath}
  p_1(1/t) = \prod_{\lambda \in \Lambda, |\lambda|< 1} (- \lambda / t)^{m(\lambda)}
  \big(t - 1/\lambda\big)^{m(\lambda)}.
\end{displaymath}
Since $b(t) = b(1/t)$, for each root $\lambda$ with $|\lambda| > 1$,
$1/\lambda$ is also a root, with the same multiplicity, and
therefore we have
\begin{displaymath}
  b(t) = p_1(t) \, p_1(1/t) \, b_1(t),
\end{displaymath}
where
\begin{displaymath}
  b_1(t) = \alpha_1 t^{-k_1} \prod_{\lambda \in \Lambda, |\lambda| = 1} 
  (t - \lambda)^{m(\lambda)}
\end{displaymath}
for some constants $\alpha_1 \in \R$ and $k_1 \in \N$.  By
construction, $b_1$ has real coefficients and $b_1(t) = b_1(1/t)$.

We will later assume that $b$ has no roots on $\Ts$ except for $t =
1$, and for that case, the proof would be complete. To establish the
lemma in its full generality, we now distinguish two cases.

{\it Case 1: $b(t)\ge 0$ on $\Ts$. } For $t \in \Ts$, we have $1/t =
\bar t$, and hence, upon reordering the product in $p_1(1/t)$,
\begin{displaymath}
  p_1(t)p_1(1/t) = \prod_{\lambda \in \Lambda, |\lambda| < 1} \Big[
  (t - \lambda)^{m(\lambda)} ( \bar t - \bar\lambda)^{m(\lambda)} \Big] > 0
  \quad \text{for all } t \in \Ts.
\end{displaymath}
Thus, if $b(t) \geq 0$ on $\Ts$, then we also have $b_1(t) \geq 0$
on $\Ts$. This implies that the multiplicity $m(\lambda)$ of all
roots $\lambda \in \Ts \cap \Lambda$ is even. We can therefore define
\begin{displaymath}
  p_2(t) = \prod_{\lambda \in \Lambda, |\lambda| = 1} (t - \lambda)^{m(\lambda)/2},
\end{displaymath}
and argue similarly as above, to obtain that $p_2(t)$ has real
coefficients and that
\begin{displaymath}
  b_1(t) = \alpha_2 t^{-k_2} p_2(t) p_2(1/t) =: b_2(t) p_2(t) p_2(1/t),
\end{displaymath}
for some constants $\alpha_2 \in \R$ and $k_2 \in \N$, and $b_2(t) =
\alpha_2 t^{-k_2}$. We can again deduce that $b_2(t) = b_2(1/t)$ and
that $b_2(t) \geq 0$ on $\Ts$, which implies that $k_2 = 0$ and
$\alpha_2 \geq 0$. Thus, we obtain $p(t)$ by defining
\begin{displaymath}
  p(t) = \sqrt{\alpha_2} p_1(t) p_2(t).
\end{displaymath}

{\it Case 2: $b(t) \not \ge 0$ on $\Ts$. } In this case the
multiplicity of roots $\lambda \in \Ts$ may be odd, and therefore we
define
\begin{displaymath}
  p_2(t) = \prod_{\lambda\in\Lambda, |\lambda|=1, {\rm Im}\lambda>0} (t-\lambda)^{m(\lambda)}.
\end{displaymath}
Thus, $p_2(t)$ contains all roots of $b_1(t)$ with positive
imaginary part and $p_2(1/t)$ contains all roots of $b_1(t)$ with
negative imaginary part. We are only left to find any roots at $\pm
1$. Since $b_1(t) = b_1(1/t)$ it follows that these roots must be
even (possibly $m(- 1) = 0$), and hence we can define
\begin{displaymath}
  p_3(t) = (t-1)^{m(1)/2} (t+1)^{m(-1)/2},
\end{displaymath}
to obtain
\begin{displaymath}
  b_1(t) = \alpha_2 t^{k_2} p_2(t) p_2(1/t) p_3(t) p_3(1/t)
\end{displaymath}
for some constants $\alpha_2 \in \R$ and $k_2 \in \Z$. Arguing as
before we find that $\alpha_2 > 0$ and $k_2 = 0$, and hence we
obtain the result if we define
\begin{displaymath}
  p(t) = \sqrt{\alpha_2} p_1(t) p_2(t) p_3(t).
  \qedhere
\end{displaymath}
\end{proof}

For any Laurent polynomial $b(t)$ we call a polynomial $p(t)$ that
satisfies $b(t) = p(t) p(1/t)$ a GRF-factorization (generalized
Riesz--F\`{e}jer factorization) of $b(t)$.

Since the Laurent polynomial $b(t)$ defined in \eqref{eq:b-definition}
clearly satisfies the condition $b(t) = b(1/t)$, we have now indeed
established \eqref{eq:fr:YYt} and \eqref{eq:fr:symm_Lqcf}. Thus, if
$Y$ were invertible, then we could deduce that $\Lqcf$ (and similarly
$\Lqcfp$) is diagonalizable. Unfortunately, as follows from the next
lemma, $Y$ is always singular.

\begin{lemma}
\label{th:fr:defn_p1}
Let $p(t)$ be any GRF-factorization of the Laurent polynomial $b(t)$
defined in \eqref{eq:b-definition}, then there exists a polynomial
$p_1(t)$ such that $p(t) = (t-1)^2 p_1(t)$.  In particular, the
operator $Y = p(T)$ can be rewritten as
\begin{displaymath}
  Y = L Y_1 \qquad \text{where} \quad Y_1 = - T p_1(T).
\end{displaymath}
\end{lemma}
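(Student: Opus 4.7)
The plan is to show that $t=1$ is a root of $b(t)$ of multiplicity at least $4$. Given this, any GRF-factorization $b(t)=p(t)p(1/t)$ must concentrate half of this order onto $p(t)$: if $p(t)=(t-1)^{k} q(t)$ with $q(1)\neq 0$, then $p(1/t) = (-1)^{k} t^{-k}(t-1)^{k} q(1/t)$, and since $q(1)q(1)\neq 0$, the multiplicity of $t=1$ in $b(t)$ is exactly $2k$. Hence $k\geq 2$, which gives the desired factorization $p(t)=(t-1)^{2} p_1(t)$.

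To verify the order-4 vanishing, I would rewrite each summand of $b(t)$ using $t^{r}-1 = (t-1)f_r(t)$ with $f_r(t)=1+t+\cdots+t^{r-1}$, together with the identities $(t^{r}-1)(t^{-r}-1)=-(t^{r}-1)^{2}/t^{r}$ and $(t-1)(t^{-1}-1)=-(t-1)^{2}/t$. A brief calculation gives
\begin{equation*}
b(t) = -\sum_{r=2}^{R} \phi''_{rF}\,\frac{(t-1)^{2}\bigl[f_r(t)^{2} - r^{2} t^{\,r-1}\bigr]}{t^{r}}.
\end{equation*}
Thus it suffices to show that the bracket $f_r(t)^{2}-r^{2}t^{r-1}$ has a double zero at $t=1$ for every $r\geq 2$. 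The value at $t=1$ equals $r^{2}-r^{2}=0$, and differentiating once and using $f_r(1)=r$, $f_r'(1)=r(r-1)/2$ yields $2r\cdot r(r-1)/2 - r^{2}(r-1) = 0$. Hence each summand contributes a factor of $(t-1)^{4}$ (up to a unit in $\C[t,t^{-1}]$ at $t=1$), and $b(t)$ itself vanishes to order $\geq 4$ at $t=1$.

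For the second assertion, having $p(t)=(t-1)^{2}p_1(t)$ with $p_1$ a polynomial, I would substitute $t\mapsto T$. Because $L = -T + 2I - T^{-1} = -T^{-1}(T-I)^{2}$, we obtain the operator identity $(T-I)^{2} = -TL$. Using that all polynomials in $T$ commute (noted at the end of Section~2.1), this gives
\begin{equation*}
Y = p(T) = (T-I)^{2}\, p_1(T) = -TL\, p_1(T) = L\,\bigl(-T p_1(T)\bigr) = L Y_1,
\end{equation*}
with $Y_1 = -T p_1(T)$, as desired.

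The only genuinely non-routine step is the order-of-vanishing calculation for $b(t)$ at $t=1$; everything else reduces to the algebraic identity relating $L$ and $T-I$ and to the bookkeeping of root multiplicities in a GRF-factorization. I would present these in the order above: multiplicity in $b$, consequence for $p$ via the factorization, then the operator rewriting using $L = -T^{-1}(T-I)^{2}$.
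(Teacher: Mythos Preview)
Your proof is correct and follows essentially the same route as the paper: both arguments factor out $(t-1)(t^{-1}-1)$ from $b(t)$ using $t^{r}-1=(t-1)f_r(t)$ and then check that the quotient vanishes at $t=1$ (your bracket $f_r(t)^{2}-r^{2}t^{r-1}$ is $t^{r-1}$ times the paper's expression $f_r(t)f_r(1/t)-r^{2}$), and both conclude with the identity $(T-I)^{2}=-TL$ to rewrite $Y$. The only organizational difference is that you bundle the two vanishing checks into a single order-$4$ statement with explicit multiplicity bookkeeping for the GRF factorization, whereas the paper peels off the roots of $p$ one at a time.
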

\begin{proof}
It can be immediately seen from the definition of $b(t)$ that $b(1)
= 0$ and, since $b(1) = p(1)^2$, it follows that $p(1) = 0$. Next,
dividing $b(t)$ by $(t-1)(t^{-1} - 1)$ we obtain
\begin{align*}
  \frac{p(t)p(1/t)}{(t-1)(t^{-1}-1)} =~& \frac{b(t)}{(t-1)(t^{-1} - 1)} \\
  =~& \sum_{r = 2}^{R} \phi_{rF}'' \big[ (1+t+\dots+t^{r-1})(1 + t^{-1}
  + \dots + t^{-r+1}) - r^2 \big],
\end{align*}
and hence
\begin{displaymath}
  \frac{p(t)p(1/t)}{(t-1)(t^{-1}-1)}\bigg|_{t = 1} = 0.
\end{displaymath}
Thus, we see that $1$ is also a (multiple) root of
$\frac{p(t)p(1/t)}{(t-1)(t^{-1}-1)}$, and we can conclude that $p(t)
= (t-1)^2 p_1(t)$ for some polynomial $p_1(t)$.

Upon rewriting the representation $p(t) = (t-1)^2 p_1(t)$ as
\begin{displaymath}
  p(t) = \big[(t-1)(t^{-1}-1)\big]\big[-t p_1(t)\big]
  = \big[- t + 2 - t^{-1} \big]\big[-t p_1(t)\big]
\end{displaymath}
we obtain $Y = p(t) = L Y_1$.
\end{proof}

Thus, as a consequence of Lemma \ref{th:fr:defn_p1}, we see that $Y$
is always singular.  Nevertheless, in order to construct a similarity
transformation to diagonalize $\Lqcfp$, we can make a similar
modification as in the next-nearest neighbour case, and simply replace
$L$ by its invertible variant $\Lmod$ defined in \eqref{eq:Lmod}. Of
course this still leaves us to verify that $Y_1$ is invertible, for
which we will introduce conditions in Sections
\ref{sec:lr:cond_bounds} and \ref{sec:fr:nonpos}.

\begin{proposition}
\label{th:fr:diag_Lqcf_Lqcfp}
Let $p(t)$ be any GRF-factorization of $b(t)$, and let
$Y = p(T) = L Y_1$, as in Lemma \ref{th:fr:defn_p1}, then
\begin{align*}
  [\Lmod Y_1] \Lqcfp = \big[ \Lc + Y X Y^T \big] [\Lmod Y_1].
\end{align*}
In particular, if $Y_1$ is invertible then $\Lqcfp$ is
diagonalizable.
\end{proposition}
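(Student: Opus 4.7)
The plan is a direct algebraic verification of the identity, from which the diagonalizability claim follows by similarity to a symmetric matrix. First I would simplify the left-hand side: since $Y_1 = -Tp_1(T)$ is a polynomial in $T$ it commutes with $\PU$, and $\Lmod\PU = L$ (from $\Lmod = L + (I-\PU)$ together with \eqref{eq:LPu}), so
\[
[\Lmod Y_1]\Lqcfp = \Lmod Y_1 \PU \Lqcf = \Lmod \PU Y_1 \Lqcf = LY_1\Lqcf = Y\Lqcf.
\]
Substituting $\Lqcf = \Lc + \Chi[\La-\Lc]$ from \eqref{eq:qcf_sum} and the GRF-factorization $\La - \Lc = YY^T = Y^TY$ from \eqref{eq:fr:YYt} rewrites this as $Y\Lc + Y\Chi Y^TY$.

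Next I would expand the right-hand side as $[\Lc + Y\Chi Y^T][\Lmod Y_1] = \Lc\Lmod Y_1 + Y\Chi Y^T\Lmod Y_1$ and match it term by term to the expression above. The key auxiliary identity is
\[
L\Lmod = L^2 + L - L\PU = L^2,
\]
again from \eqref{eq:LPu}. Since all Laurent polynomials in $T$ commute with each other and with $L$, $\Lmod$, and $\PU$, and since $\Lc = W_F''L$, the first pair of terms match via
\[
\Lc\Lmod Y_1 = W_F''L\Lmod Y_1 = W_F''L^2 Y_1 = \Lc\,LY_1 = \Lc Y = Y\Lc.
\]
For the second pair, writing $Y^T = LY_1^T$ (the transpose of a polynomial in $T$ is the polynomial in $T^{-1}$) and applying the same key identity,
\[
Y^T\Lmod Y_1 = LY_1^T\Lmod Y_1 = L\Lmod Y_1^TY_1 = L^2 Y_1^TY_1 = (LY_1^T)(LY_1) = Y^TY,
\]
so left-multiplication by $Y\Chi$ gives $Y\Chi Y^T\Lmod Y_1 = Y\Chi Y^TY$.

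For the final assertion, if $Y_1$ is invertible then so is $\Lmod Y_1$ (as $\Lmod$ itself is invertible by \eqref{eq:Lmod}), and the identity just proved rearranges to
\[
\Lqcfp = [\Lmod Y_1]^{-1}\bigl[\Lc + Y\Chi Y^T\bigr][\Lmod Y_1].
\]
Since $\Lc = W_F''L$ is symmetric and $(Y\Chi Y^T)^T = Y\Chi Y^T$ (because $\Chi$ is diagonal), the matrix $\Lc + Y\Chi Y^T$ is symmetric and hence diagonalizable, and similarity preserves diagonalizability. The main subtlety in the whole argument is recognizing the correct conjugator $\Lmod Y_1$: the natural candidate $Y = LY_1$ is singular, and the replacement of the inner $L$ by the invertible $\Lmod$ is exactly what the identity $L\Lmod = L^2$ allows one to do without changing the matrix product. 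The remaining algebra is just commutation of polynomials in $T$ with $\PU$ and $\Lmod$.
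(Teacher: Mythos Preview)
Your proof is correct and follows essentially the same approach as the paper's: both exploit that $Y_1$, $L$, $\Lmod$, and $\PU$ all commute (as Laurent polynomials in $T$ or via $Te=e$), together with the bridge identity $\Lmod\PU = L$ (your equivalent version $L\Lmod = L^2$), to convert the singular conjugator $Y = LY_1$ into the invertible one $\Lmod Y_1$. The only difference is organizational---you first compress the left-hand side to $Y\Lqcf$ and then match term by term, whereas the paper carries $\Lmod$ and $\PU$ through a single chain of equalities---but the underlying algebra is identical.
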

\begin{proof}
We use \eqref{eq:fr:YYt} to represent $\La - \Lc$, the fact that
polynomials of $T$ and $\Lmod$ commute, and the fact that $\PU \Lmod
= \Lmod \PU = L$, to obtain
\begin{align*}
  [\Lmod Y_1] \Lqcfp =~& [\Lmod Y_1] \Lc 
  + Y_1 [\Lmod \PU] \Chi L Y_1 Y_1^T L \\
 =~& \Lc [\Lmod Y_1] + Y_1 L \Chi L Y_1^T \PU [\Lmod Y_1] \\
 =~& [\Lc + Y_1 L \Chi L Y_1^T ] \, [\Lmod Y_1].   \qedhere
\end{align*}
\end{proof}

\subsection{Condition number of the $\ell^2$ eigenbasis}
\label{sec:lr:cond_bounds}
To investigate the invertibility of $Y_1$ we will study the Laurent
polynomial
\begin{equation}
\label{eq:fr:defn_b1}
b_1(t) = \frac{b(t)}{\big[ (t-1)(t^{-1}-1) \big]^2}
\end{equation}
on the unit circle $\Ts$. It will quickly become apparent that
$b_1(t)$ needs to be bounded away from zero on $\Ts$ in order to
obtain invertibility of $Y_1$ and bounds on the inverse that are
uniform in $N$ and $\As$. Consequently, we focus on the case where
$b(t)$ does not change sign on $\Ts$ (note that $b(t)$ is always real
on $\Ts$). To show that this is a reasonable assumption we will, in
Section \ref{sec:fr:nonpos}, study the case of non-positive
coefficients $\phi_{rF}'' \leq 0$ for $r = 2, \dots, R$, which is of
particular interest in applications as this condition is satisfied for
most practical interaction potentials. We will show that one can
obtain bounds of the following type: {\it there exist positive
 constants $\beta_0, \beta_1$ such that}
\begin{equation}
\label{eq:fr:b1_bound_ass}
\beta_0^2 \leq |b_1(t)| \leq \beta_1^2 \qquad \forall t \in \Ts.
\end{equation}
Clearly, since $b(t)$ is real on $\Ts$, it is necessary that $b_1(t)$
does not change sign on $\Ts$, that is, either $b_1(t) > 0$ or $b_1(t)
< 0$. If $b_1(t) > 0$ then the matrix $Y_1$ defined in Lemma
\ref{th:fr:defn_p1} is real, however, if $b_1(t) < 0$ then it is
imaginary. For the following analysis we prefer to construct a real
coordinate transform.

According to Remark \ref{rem:fr:neg_b} and Lemma \ref{th:fr:defn_p1},
we can choose a polynomial $p(t) = (t-1)^2 p_1(t)$ with real
coefficients such that
\begin{displaymath}
b(t) = \sigma p(t) p(1/t),
\end{displaymath}
where $\sigma \in \{+1, -1\}$ is the sign of $b_1(t)$ on $\Ts$. Thus, if we
(re-)define $Y_1$ accordingly as
\begin{equation}
\label{eq:fr:new_Y1}
Y_1 = - T p_1(T)
\end{equation}
then Proposition \ref{th:fr:diag_Lqcf_Lqcfp} implies that
\begin{equation}
\label{eq:fr:diagon_newY1}
[\Lmod Y_1] \Lqcfp = \big[\Lc + \sigma (L Y_1) \Chi (L Y_1)^T\big] 
[\Lmod Y_1].
\end{equation}
Next, we show that $\eqref{eq:fr:defn_b1}$ implies invertibility of
$Y_1$, including explicit bounds. Beforehand, however, we make a brief
remark on the connection of the sign of $b(t)$ and a relationship
between $\La$ and $\Lc$.

\begin{remark}
\label{rem:sign_b}
The assumption $b_1(t) > 0$ on $\Ts$, or, more generally, $b(t) \geq
0$ on $\Ts$, implies that
\begin{displaymath}
  \< \La u, u \> \geq \< \Lc u, u \> \qquad \forall u \in \Rper.
\end{displaymath}
This follows simply from the fact that $\La$ and $\Lc$ share the
same eigenvectors, and that the spectrum of $\La - \Lc$, which is
contained in $b(\Ts)$ is non-negative.

Conversely, if $b(t) \leq 0$ on $\Ts$ then, by the same argument,
\begin{displaymath}
  \< \La u, u \> \leq \< \Lc u, u \> \qquad \forall u \in \Rper.
\end{displaymath}
These observations are interesting in their own right, and will
moreover lead to useful estimates on the eigenvalues of the QCF
operator $\Lqcfp$ in Section \ref{sec:fr:evals}.
\end{remark}

\begin{lemma}
\label{th:fr:bounds_Y1}
Let $Y_1$ be defined as in \eqref{eq:fr:new_Y1} and suppose that
$b(t)$ satisfies \eqref{eq:fr:b1_bound_ass}, then $Y_1$ is
invertible and we have the bounds
\begin{displaymath}
  \|Y_1\| \leq \beta_1, 
  \quad \text{and} \quad
  \|Y_1^{-1} \| \leq 1/ \beta_0.
\end{displaymath}
\end{lemma}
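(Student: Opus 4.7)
The plan is to exploit the fact that $Y_1$, being a polynomial in the orthogonal (hence normal) operator $T$, is itself normal. For a normal operator the operator norm equals the spectral radius, and by the spectral mapping theorem together with the eigenstructure \eqref{eq:T-eigenbasis}, the spectrum of $Y_1 = -T p_1(T)$ is
\begin{displaymath}
\{ -\lambda_k p_1(\lambda_k) : 1 \le k \le N \}, \qquad \lambda_k = e^{2\pi i k/N} \in \Ts.
\end{displaymath}
Since $|\lambda_k| = 1$, the bounds on $\|Y_1\|$ and $\|Y_1^{-1}\|$ reduce to pointwise upper and lower bounds for $|p_1(\lambda_k)|$, and for invertibility it suffices to exclude zero from this set.

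The next step is to relate $|p_1|$ to $|b_1|$ on $\Ts$. Substituting the factorization $p(t) = (t-1)^2 p_1(t)$ from Lemma \ref{th:fr:defn_p1} into $b(t) = \sigma p(t) p(1/t)$ (which uses Remark \ref{rem:fr:neg_b} to keep $p_1$ real in both signs of $b$) gives
\begin{displaymath}
b(t) = \sigma (t-1)^2 (1/t-1)^2\, p_1(t) p_1(1/t),
\end{displaymath}
so that the definition \eqref{eq:fr:defn_b1} yields $b_1(t) = \sigma\, p_1(t) p_1(1/t)$. For $t \in \Ts$ one has $1/t = \bar t$, and since $p_1$ has real coefficients, $p_1(\bar t) = \overline{p_1(t)}$. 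Hence
\begin{displaymath}
|b_1(t)| = |p_1(t)|^2 \qquad \text{for all } t \in \Ts.
\end{displaymath}

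Combining this identity with the hypothesis \eqref{eq:fr:b1_bound_ass} yields $\beta_0 \le |p_1(t)| \le \beta_1$ on $\Ts$, and in particular at each eigenvalue $\lambda_k$ of $T$. Since $|\lambda_k| = 1$, the spectrum of $Y_1$ lies in the annulus $\{\beta_0 \le |z| \le \beta_1\}$. Normality of $Y_1$ now gives $\|Y_1\| \le \beta_1$ directly, and the fact that $0$ is excluded from the spectrum implies that $Y_1$ is invertible with $\|Y_1^{-1}\| \le 1/\beta_0$.

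I do not expect a real obstacle here: the only subtle point is the identification $|b_1(t)| = |p_1(t)|^2$ on $\Ts$, which hinges on $p_1$ having real coefficients, and this is exactly what the extended Riesz--F\'ejer statement in Lemma \ref{th:symmetric-product} together with Remark \ref{rem:fr:neg_b} was set up to guarantee. Everything else is a routine application of the spectral theory for normal matrices.
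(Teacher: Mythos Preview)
Your proof is correct and follows essentially the same route as the paper: both arguments identify $|b_1(t)| = |p_1(t)|^2$ on $\Ts$ via the GRF-factorization (using that $p_1$ has real coefficients so $p_1(1/t)=\overline{p_1(t)}$ for $|t|=1$), then transfer the bounds \eqref{eq:fr:b1_bound_ass} to $|p_1|$, and finally invoke that $Y_1=-Tp_1(T)$ is a polynomial in the unitary operator $T$ and hence normal, so its operator norm and the norm of its inverse are controlled by $\sup_{t\in\Ts}|tp_1(t)|$ and $\inf_{t\in\Ts}|tp_1(t)|$ respectively. Your handling of the sign $\sigma$ is in fact slightly cleaner than the paper's write-up.
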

\begin{proof}
 Let $p(t) = (t-1)^2 p_1(t)$ be a GRF-factorization of $\sigma b(t)$
 with real coefficients, then $p_1(t)$ is a GRF-factorization of
 $b_1(t)$ with real coefficients. Hence, for $t \in \Ts$, we obtain
\begin{displaymath}
  b(t) = p_1(t)p_1(\bar t) = |p_1(t)|^2,
\end{displaymath}
which implies that
\begin{equation}
  \label{eq:fr:p1_bounds}
  \beta_0 \leq |t p_1(t)| \leq \beta_1 \qquad \forall t \in \Ts.
\end{equation}

By assumption, $Y_1 = - T p_1(T) =: q_1(T)$. Since $T$ is
unitary, this implies that $Y_1$ has an orthonormal eigenbasis and
that the spectrum of $Y_1$ is contained in $q_1(\Ts)$. This
immediately gives
\begin{displaymath}
  \| Y_1 \| \leq \sup_{t \in \Ts} |t p_1(t)| \leq \beta_1.
\end{displaymath}
Moreover, we have
\begin{displaymath}
  \|Y_1^{-1} \|^{-1} = \inf_{\|u\| = 1} \|Y_1 u\| \geq \inf_{t \in \Ts} |t p_1(t)| 
  \geq \beta_0,
\end{displaymath}
which gives the second bound.
\end{proof}

According to \eqref{eq:fr:diagon_newY1}, and Lemma
\ref{th:fr:bounds_Y1}, we have
\begin{equation}
\label{eq:fr:defn_Lsym}
[\Lmod Y_1] \Lqcfp [\Lmod Y_1]^{-1} = \Lc + \sigma [LY_1] X [L Y_1]^T =: L^{\rm sym},
\end{equation}
that is, $\Lqcf$ and $\Lsymm$ are similar matrices.  Since $\Lsymm$ is real and
symmetric, there exists an orthogonal operator $\Vsymm \in
\R^{N\times N}$ such that
\begin{displaymath}
\Lsymm = \Vsymm  \Lambda [\Vsymm]^T,
\end{displaymath}
where $\Lambda$ is the diagonal matrix of eigenvalues of
$\Lqcfp$, and we obtain
\begin{displaymath}
\Lqcfp = [Y_1^{-1} \Lmod^{-1} \Vsymm] \Lambda [Y_1^{-1} \Lmod^{-1} \Vsymm]^{-1},
\end{displaymath}
that is, the operator $Y_1^{-1} \Lmod^{-1} \Vsymm$ diagonalizes
$\Lqcfp$. As in the nearest neighbour case, the eigenbasis $Y_1^{-1}
\Lmod^{-1} \Vsymm$ is poorly scaled and would lead to an $O(N^2)$
condition number. However, the same argument as in Remark
\ref{rem:rescale_evecs} shows how to rescale the basis to obtain the
following theorem.

\begin{theorem}
\label{th:fr:condbnd_abstract}
Suppose that the Laurent polynomial $b(t)$ defined in
\eqref{eq:b-definition} satisfies \eqref{eq:fr:b1_bound_ass}, and
let $Y_1$ be defined by \eqref{eq:fr:new_Y1}. Then the
operator $\Vqcf \in \R^{N \times N}$,
\begin{displaymath}
  \Vqcf = \big[ W_F'' Y_1^{-1}  + \sigma \PU X Y_1^T L \big] \Vsymm
\end{displaymath}
diagonalizes $\Lqcfp$, that is, $\Lqcfp \Vqcf = \Vqcf \Lambda$,
where $\Lambda$ is a real diagonal matrix of eigenvalues.

Moreover, if $W_F'' > 0$, and if $\sigma \beta_1^2 / W_F'' > -1/4$,
then $\Vqcf$ is invertible and $\cond(\Vqcf)$ is bounded above by a
constant that depends only on $W_F'', \beta_0, \beta_1$, and, in
particular is independent of $N$ and $\As$.
\end{theorem}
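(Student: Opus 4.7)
The plan is to verify both claims starting from the similarity $[\Lmod Y_1]\Lqcfp[\Lmod Y_1]^{-1} = \Lsymm$ established in Proposition \ref{th:fr:diag_Lqcf_Lqcfp} and \eqref{eq:fr:defn_Lsym}, together with the symmetric spectral decomposition $\Lsymm = \Vsymm \Lambda (\Vsymm)^T$. For the diagonalization claim, the na\"ive eigenbasis $Y_1^{-1}\Lmod^{-1}\Vsymm$ diagonalizes $\Lqcfp$ but has $\cond = O(N^2)$ because of $\Lmod^{-1}$. Following the rescaling strategy of Remark \ref{rem:rescale_evecs}, I would compute $Y_1^{-1}\Lmod^{-1}\Vsymm\Lambda = Y_1^{-1}\Lmod^{-1}\Lsymm\Vsymm$ and simplify using $\Lc = W_F'' L$, $\Lmod^{-1}L = \PU$, and the fact that $Y_1$ (a polynomial in $T$) commutes with $\PU$, arriving at $[W_F''\,Y_1^{-1}\PU + \sigma\,\PU X Y_1^T L]\Vsymm$. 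This differs from the claimed $\Vqcf$ only by the rank-one term $(W_F''/N)\,Y_1^{-1}(e\otimes e)\Vsymm$, supported on the zero eigenvector $e$. A short column-wise check then confirms $\Lqcfp\Vqcf = \Vqcf\Lambda$: the correction vanishes for $j \ge 2$ because $(\Vsymm)_{\cdot,j}\perp e$, and both sides annihilate the $j=1$ column since $\lambda_1 = 0$ and $\Lqcfp e = 0$.

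For the condition number estimate, using $\PU Y_1 = Y_1 \PU$ I would factor
\begin{displaymath}
  \Vqcf = Y_1^{-1}\bigl[W_F'' I + \sigma\,\PU Y_1 X Y_1^T L\bigr]\Vsymm =: Y_1^{-1} B\,\Vsymm.
\end{displaymath}
Since $\Vsymm$ is orthogonal, $\cond(\Vqcf) \le \cond(Y_1)\,\cond(B) \le (\beta_1/\beta_0)\,\cond(B)$ by Lemma \ref{th:fr:bounds_Y1}. The upper bound $\|B\| \le W_F'' + 4\beta_1^2$ is immediate from $\|\PU\|, \|X\| \le 1$, $\|L\| \le 4$, and $\|Y_1\| \le \beta_1$. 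For the matching lower bound I would use the transpose trick: writing
\begin{displaymath}
  B^T = W_F''\bigl[I - \alpha\, L Y_1 X Y_1^T \PU\bigr], \qquad \alpha := -\sigma/W_F'',
\end{displaymath}
I would apply Lemma \ref{th:fr:condell2_lowerbd} with $Z = Y_1$. The hypothesis $\sigma\beta_1^2/W_F'' > -1/4$ translates exactly to $\alpha\beta_1^2 < 1/4$, which is the range that lemma is designed to cover (reducing to the $Z=I$, $\alpha < 1/4$ case of Lemma \ref{th:condell2_lowerbd}). This would yield $\|B^{-T}\| \le 1/(W_F''\gamma_0)$ with $\gamma_0$ depending only on $\alpha$ and $\beta_1$, completing the estimate.

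The main obstacle is this last step. The direct estimate $\|Bu\| \ge (W_F'' - 4\beta_1^2)\|u\|$ would require $W_F'' > 4\beta_1^2$ and would fail in the regime $\sigma = +1$ with large $\beta_1^2$, where the hypothesis imposes no restriction. Reaching the full stated range must exploit both the transpose reformulation and the commutation of $Y_1$ with polynomials in $T$, which is precisely what Lemma \ref{th:fr:condell2_lowerbd} is built to deliver. Modulo that lemma, combining the pieces above yields a bound on $\cond(\Vqcf)$ that depends only on $W_F''$, $\beta_0$, and $\beta_1$, uniformly in $N$ and $\As$.
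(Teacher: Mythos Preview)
Your Step 1 is correct and essentially parallel to the paper's argument, though the paper verifies $\Lqcfp \Vqcf = \Vqcf \Lambda$ by a direct algebraic expansion rather than via rescaling plus a column-wise rank-one correction. Your route works once you stipulate that $e/\sqrt{N}$ is taken as one of the columns of $\Vsymm$ (which you may, since $\Lsymm e = 0$).

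The genuine gap is in Step 2. Your factorization $\Vqcf = Y_1^{-1} B\,\Vsymm$ with $B = W_F'' I + \sigma\,\PU Y_1 X Y_1^T L$ gives
\[
B^T \;=\; W_F''\bigl[I - \alpha\, Y_1 L X Y_1^T \PU\bigr],\qquad \alpha = -\sigma/W_F'',
\]
but Lemma \ref{th:fr:condell2_lowerbd} with $Z = Y_1$ controls $A^T = I - \alpha\, Y_1^T L Y_1\, X \PU$. These are different operators: in your $B^T$ the projection $X$ sits between $Y_1$ and $Y_1^T$, whereas the lemma's structure places $X$ adjacent to $\PU$ (this is exactly what its proof exploits via the splitting $X[I-\alpha Z^T L Z]X + (I{-}X)[\cdots]$). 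Since $X$ does not commute with $Y_1^T$, Lemma \ref{th:fr:condell2_lowerbd} does not apply to your $B^T$ as written.

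The fix is to factor $Y_1^{-T}$ out on the \emph{left} in the transpose rather than on the right:
\[
[\Vqcf]^T
= [\Vsymm]^T\bigl[W_F'' Y_1^{-T} + \sigma L Y_1 X \PU\bigr]
= W_F''\,[\Vsymm]^T\, Y_1^{-T}\,\bigl[I - \alpha\, Y_1^T L Y_1\, X \PU\bigr],
\]
which now matches the lemma exactly with $Z = Y_1$ and yields $\|[\Vqcf]^{-1}\| \le \beta_1/(W_F''\gamma_0)$. This is the factorization the paper uses. (Equivalently, your $B^T$ is conjugate to the paper's bracket via $Y_1^T$, so your route can be salvaged at the cost of an extra factor $\cond(Y_1)\le\beta_1/\beta_0$; but the left-factorization is the clean way to land on Lemma \ref{th:fr:condell2_lowerbd}.)
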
 
\begin{proof}
 The proof of this result is very similar to the proof of Theorem
 \ref{th:condell2}, and hence we shall be fairly brief. First, we
 note that, by Lemma \ref{th:fr:bounds_Y1}, the matrix $Y_1$ can be
 chosen to be invertible, and hence $\Vqcf$ is
 well-defined. Moreover, we recall that $Y_1$ is a polynomial in $T$
 and hence commutes with all operators that commute with $T$, such as
 other polynomials in $T$, the modified negative Laplace operator
 $\Lmod$, and the projection operator $\PU$.

{\it Step 1: Diagonalization. } As in the computation in the proof
of Theorem \ref{th:condell2}, we obtain
\begin{align*}
  \Lqcfp \Vqcf
  =~& \big[ W_F'' L + \sigma \PU \Chi Y_1 Y_1^T L^2 \big] \,
  \big[ W_F'' Y_1^{-1} + \sigma \PU \Chi Y_1^T L \big] \Vsymm \\
  =~& \big[ W_F'' Y_1^{-1} + \sigma \PU \Chi Y_1^T L \big] \,
  \big[ W_F'' L + \sigma L Y_1 \PU \Chi Y_1^T L \big] \Vsymm \\
  =~& \big[ W_F'' Y_1^{-1} + \sigma \PU \Chi Y_1^T L \big] \,
  \Vsymm \Lambda \\
  =~& \Vqcf \Lambda.
\end{align*}

{\it Step 2: Estimating $\cond(\Vqcf)$. } Suppose now that $W_F'' >
0$. As before, estimating $\|\Vqcf\|$ is straightforward. Using
Lemma \ref{th:fr:bounds_Y1}, we obtain
\begin{equation}
  \label{eq:finite-range:condell2:condell2_upperbd}
  \|\Vqcf\| 
  \le
  W_F'' \|Y_1^{-1}\| + \|\PU \Chi Y_1^T L\|
  \le 
  W_F'' / \beta_0 + 4 \beta_1,
\end{equation}
where $\beta_0$ and $\beta_1$ are the constants from
\eqref{eq:fr:b1_bound_ass}.

To estimate $\Vqcf$ from below, we will use again the fact that
$\|[\Vqcf]^{-1} \| = \|[\Vqcf]^{-T}\| \leq 1/(W_F''
\tilde{\gamma}_0)$ where $\tilde{\gamma}_0 > 0$ satisfies
\begin{equation}
  \label{eq:fr:Vqcf_lower_bnd}
  \| [\Vqcf]^T u \| \geq \tilde{\gamma}_0 \|u\| \qquad \forall u \in \Rper.
\end{equation}
Writing out $(\Vqcf)^T$ in full, we get
\begin{align*}
  [\Vqcf]^T =~& W_F'' Y_1^{-T} [\Vsymm]^T \big[I +
  \smfrac{\sigma}{W_F''} Y_1^T L Y_1 \Chi \PU \big] \\
  =~& W_F'' [\Vsymm]^T Y_1^{-T} \big[I - \alpha Y_1^T L Y_1 \Chi \PU \big],
\end{align*}
where $\alpha = - \sigma/W_F''$. If $\alpha \beta_1^2 < 1/4$, then
we can use Lemma \ref{th:fr:condell2_lowerbd} below to deduce that
\begin{displaymath}
  \big\| \big[I + 
  \smfrac{\sigma}{W_F''} Y_1^T L Y_1 \Chi \PU \big] v \big\|
  \geq \gamma_0 \| v \| \qquad \forall v \in \Rper,
\end{displaymath}
where $\gamma_0$ depends only on $\alpha \beta_1^2 = -\sigma
\beta_1^2/W_F''$, but is independent of $N$ and $\As$. This implies
\eqref{eq:fr:Vqcf_lower_bnd} with
$\tilde{\gamma}_0 = W_F'' \gamma_0 / \beta_1$.

Combining \eqref{eq:finite-range:condell2:condell2_upperbd} and
\eqref{eq:fr:Vqcf_lower_bnd} gives the stated result.
\end{proof}

\begin{lemma}
\label{th:fr:condell2_lowerbd}
Let $A = I - \alpha \PU \Chi Z^T L Z$, where $Z \in \R^{N \times
  N}$ commutes with $\PU$, and where $\alpha \in \R$ satisfies
\begin{displaymath}
  -\infty < \alpha \|Z\|^2 < 1/4,
\end{displaymath}
then there exists a constant $\gamma_0 > 0$, depending only on
$\alpha \|Z\|^2$, such that
\begin{displaymath}
  \| A^T u \| \geq \gamma_0 \|u\|
  \qquad \forall u \in \Rper.
\end{displaymath}
\end{lemma}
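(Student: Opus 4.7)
The plan is to reduce to the subspace $\Us$ and then split $v \in \Us$ along the atomistic/continuum partition induced by $\Chi$, treating the regime $\alpha \geq 0$ by a direct triangle bound and the harder regime $\alpha < 0$ by completing the square in the continuum component.

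First I would reduce to $\Us$. Since $Z$ commutes with $\PU$, the vector $Ze$ must be a scalar multiple of $e$, and hence $LZe = 0$. Writing $u = v + \bar u\, e$ with $v = \PU u \in \Us$, the vector $A^T u = (v - \alpha Z^T L Z \Chi v) + \bar u\, e$ splits orthogonally into an element of $\Us$ plus a multiple of $e$, so $\|A^T u\|^2 = \|A^T v\|^2 + \|(I-\PU) u\|^2$. With $\gamma_0 \leq 1$ it therefore suffices to show $\|v - \alpha Z^T L Z \Chi v\| \geq \gamma_0 \|v\|$ for every $v \in \Us$. When $0 \leq \alpha \|Z\|^2 < 1/4$, the triangle inequality combined with $\|Z^T L Z\| \leq 4\|Z\|^2$ and $\|\Chi\| \leq 1$ yields $\|A^T v\| \geq (1 - 4\alpha\|Z\|^2)\|v\|$, settling the case $\alpha \geq 0$.

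For the hard case $\alpha < 0$, I would decompose $v = \phi + \psi$ with $\phi = \Chi v$ and $\psi = (I - \Chi)v$, so $\|v\|^2 = \|\phi\|^2 + \|\psi\|^2$, and set $M = Z^T L Z$. The orthogonal decomposition of $\R^N$ by $\Chi$ and $I - \Chi$ gives the Pythagorean identity $\|A^T v\|^2 = \|\phi - \alpha\Chi M\phi\|^2 + \|\psi - \alpha(I-\Chi)M\phi\|^2$; expanding the first summand via $\langle \phi, M\phi\rangle = \|DZ\phi\|^2$ and $\alpha = -|\alpha|$ produces $\|\phi\|^2 + 2|\alpha|\|DZ\phi\|^2 + \alpha^2\|\Chi M\phi\|^2$. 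For fixed $\phi$ and with $\gamma_0 < 1$, the expression $\|\psi - \alpha(I-\Chi)M\phi\|^2 - \gamma_0^2\|\psi\|^2$ is a convex quadratic form in $\psi$ supported on $\Cs$; dropping the mean-zero side-condition on $v$ only weakens the lower bound being proved, and completing the square produces the infimum $-\frac{\alpha^2\gamma_0^2}{1-\gamma_0^2}\|(I-\Chi)M\phi\|^2$. The target inequality then reduces to
\begin{equation*}
(1-\gamma_0^2)\|\phi\|^2 + 2|\alpha|\|DZ\phi\|^2 + \alpha^2\|\Chi M\phi\|^2 - \frac{\alpha^2\gamma_0^2}{1-\gamma_0^2}\|(I-\Chi)M\phi\|^2 \geq 0.
\end{equation*}

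To close the argument I would use that $M = (DZ)^T(DZ)$ is positive semidefinite with $\|M\| \leq 4\|Z\|^2$, so $\|(I-\Chi)M\phi\|^2 \leq \|M\phi\|^2 \leq \|M\|\langle M\phi, \phi\rangle \leq 4\|Z\|^2\|DZ\phi\|^2$. Absorbing the negative term into $2|\alpha|\|DZ\phi\|^2$ yields the sufficient condition $1 - \gamma_0^2 \geq 2|\alpha|\gamma_0^2\|Z\|^2$, satisfied by $\gamma_0^2 = 1/(1 + 2|\alpha|\|Z\|^2)$, a quantity depending only on $\alpha\|Z\|^2$. The main obstacle is exactly this regime $\alpha \to -\infty$, where the naive operator-norm estimate on $\alpha\PU\Chi Z^T L Z$ is useless; the structural observation that restores uniformity is that the ``bad'' cross term between $\psi$ and $(I-\Chi)M\phi$ lives entirely in the $\Cs$-block, so the unconstrained infimum in $\psi$ kills it and leaves a form in $\phi$ alone that is uniformly coercive via $M \leq \|M\|\,I$.
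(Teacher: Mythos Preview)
Your proof is correct. Both your argument and the paper's share the same skeleton: reduce to $\Us$ via the orthogonal splitting $u = \PU u + (I-\PU)u$, and then decompose $\|[I - \alpha Z^T L Z \Chi] w\|^2$ Pythagoreanly along the projections $\Chi$ and $I-\Chi$. The difference lies in how each piece is estimated. The paper treats both signs of $\alpha$ at once by introducing a free parameter $\epsilon$ and applying the weighted Cauchy inequality $\|a+b\|^2 \geq (1-\epsilon)\|a\|^2 + (1-\epsilon^{-1})\|b\|^2$ to the $(I-\Chi)$-block, then bounding $\|(I-\Chi)M\phi\|$ crudely by $\|M\|\,\|\phi\|$ and optimizing $\epsilon$. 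You instead handle $\alpha \geq 0$ by a one-line triangle bound and, for $\alpha < 0$, complete the square exactly in $\psi$ and exploit the positive-semidefiniteness of $M = (DZ)^T(DZ)$ via the sharper inequality $\|M\phi\|^2 \leq \|M\|\,\langle M\phi, \phi\rangle$, which lets the cross term be absorbed by the positive $2|\alpha|\,\|DZ\phi\|^2$ coming from the $\Chi$-block. This buys you a strictly better constant: your $\gamma_0^2 = 1/(1+2|\alpha|\,\|Z\|^2)$ decays like $|\alpha|^{-1}$ as $\alpha \to -\infty$, whereas the paper's explicit constant (see their formula for $\gamma_0^2$ in the $Z=I$ case) decays like $|\alpha|^{-2}$. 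The paper's approach has the minor advantage of a uniform treatment of both signs, but yours is cleaner and sharper in the regime that actually matters for the application.
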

\begin{proof}
We decompose $A^T$ into
\begin{align*}
  A^T = I + \alpha Z^T L Z \Chi \PU
  = \big[I-\PU\big] + \PU\big[ I - \alpha Z^T L Z \Chi \PU\big],
\end{align*}
where we have used the fact that $\PU L = L$ and that $\PU$
commutes with $Z$.  Since $\PU$ is an
orthogonal projection we obtain, again using $\PU L = L$,
\begin{equation}
  \label{eq:condell2:20}
  \begin{split}
    \| A^T v \|^2 =~& \|[1-\PU]v\|^2 
    + \| [I - \alpha Z^T L Z \Chi]\PU v\|^2.
  \end{split}
\end{equation}
We will show next that
\begin{equation}
  \label{eq:condell2:21}
  \| [1-\alpha Z^TLZ\Chi] w \|^2 \geq (1-\epsilon) \| w \|^2
  \qquad \forall w \in \Us,
\end{equation}
where $\epsilon \in (0, 1)$ is defined in
\eqref{eq:condell2:optimal_epsilon} and depends only on $\alpha
\|Z\|^2$, but not on $N$ or $\As$. Hence, \eqref{eq:condell2:21}
combined with \eqref{eq:condell2:20}, gives the desired result
\begin{displaymath}
  \| A^T v \|^2 \geq \min(1, 1-\epsilon) (
  \|[1-\PU]v\|^2 + \| \PU v\|^2) 
  = \gamma_0 \| v \|^2,
\end{displaymath}
with $\gamma_0 = \sqrt{1-\epsilon}$.

{\it Proof of \eqref{eq:condell2:21}. } We begin by splitting the
operator, using $\Chi^2 = \Chi$, into
\begin{align*}
  \notag
  [I-\alpha Z^T L Z \Chi]
  =~&
  \Chi [I-\alpha Z^T L Z \Chi] + [I-\Chi] [I-\alpha  Z^T L Z\Chi] \\
  =~&
  \Chi[I - \alpha Z^T L Z]\Chi
  + [I-\Chi][I - \alpha Z^T L Z\Chi] \\
  =:~& S_1 + S_2.
\end{align*}
Since $\Chi$ is an orthogonal projection, we have, for any $w \in
\Us$,
\begin{equation}
  \label{eq:condell2:25}
  \big\| [I - \alpha Z^T L Z \Chi] w \big\|^2 = \| S_1 w\|^2 + \|S_2 w\|^2.
\end{equation}

{\it Estimating $S_1$. } 
Since $S_1 = \Chi [I-\alpha Z^T L Z]\Chi$ is a symmetric operator,
the following variational bound is found using the fact that $0 \leq
\< L x, x \> \leq 4.$ We obtain
\begin{align*}
  \big\< \Chi [I - \alpha Z^T L Z] \Chi w, w \big\> =~& 
  \|\Chi w \|^2 - \alpha \big\< L Z\Chi w, Z \Chi w \big\> \\
  \geq~& \min\big( 1, 1 - 4 \alpha \|Z\|^2 \big) \| \Chi w \|,
\end{align*}
where the last equality follows by distinguishing the cases $\alpha <
0$ and $\alpha \geq 0$.

In summary, if $\alpha \|Z\|^2 < 1/4$, then we have the $N,
\As$-independent bound
\begin{equation}
  \label{eq:condell2:32}
  \| S_1 w \|^2 \geq \big[\min(1, 1-4\alpha \|Z\|^2)\big]^2  \|X w\|^2
  \qquad \forall w \in \R^{N}.
\end{equation}

{\it Estimating $S_2$. } Due to the good estimate on $S_1$ we only
need fairly rough estimates on the term $\| S_2 w \|^2$. Application
of the Cauchy--Schwartz inequality and a weighted Cauchy inequality
provides the estimate
\begin{align*}
  \| S_2 w\|^2 \geq (1 - \epsilon) \| [I-\Chi] w \|^2 
  + (1-\epsilon^{-1}) \alpha^2 \| [I-\Chi] [Z^T L Z] \Chi w \|^2,
\end{align*}
for any $\epsilon \in (0, 1)$.  Using the fact that $I-\Chi$ is an
orthogonal projection, $\|L\| \leq 4$,
we can further estimate
\begin{equation}
  \label{eq:condell2:35}
  \| S_2 w\|^2 \geq (1 - \epsilon) \| [I-\Chi] w \|^2 
  + (1-\epsilon^{-1}) 16 \|Z\|^4 \alpha^2  \|\Chi w \|^2.
\end{equation}

{\it Combining the estimates. } If we define $\tilde\alpha = 4 \alpha
\|Z\|^2$, and insert \eqref{eq:condell2:32} and
\eqref{eq:condell2:35} into \eqref{eq:condell2:25}, we obtain, for
all $w \in \Us$,
\begin{align*}
  \| [I-\alpha Z^T L Z \Chi] w \|^2
  \geq~& 
  \big\{  \min(1, (1-\tilde\alpha)^2)
  + (1-\epsilon^{-1})\tilde\alpha^2 \big\} \| Xw\|^2 
  + (1-\epsilon) \|[I-X]w\|^2 \\
  \geq~&
  \min\big\{ \min(1, (1-\tilde\alpha)^2) +
  (1-\epsilon^{-1})\tilde{\alpha}^2, 1 - \epsilon \big\} \| w \|^2,
\end{align*}
for any $\epsilon \in (0, 1)$. Since $\min(1, (1-\tilde\alpha)^2) >
0$ it is clear that choosing $\epsilon$ sufficiently close to $1$
gives a positive lower bound. To optimize this constant with respect
to $\epsilon$, we have to choose $\epsilon$ to equalize the two
competitors in the outer $\min$ formula. The resulting choice is
\begin{equation}
  \label{eq:condell2:optimal_epsilon}
  \epsilon = \cases{
    \sqrt{\tilde{\alpha}^2 + \smfrac14 \tilde{\alpha}^4}
    - \smfrac12 \tilde{\alpha}^2, 
    & \tilde{\alpha} \leq 0, \\[2mm]
    \tilde\alpha - \tilde{\alpha}^2  +\sqrt{2 (\tilde{\alpha}^2 - \tilde{\alpha}^3) + \tilde{\alpha}^4}, 
    & 0 < \tilde{\alpha} < 1,
  }
\end{equation}
which concludes the proof of \eqref{eq:condell2:21}. (As a matter of
interest, $\epsilon \to 1$ as $\tilde{\alpha} \to 1$, $\epsilon = 0$
for $\tilde{\alpha} = 0$, and $\epsilon \to 1$ as $\tilde{\alpha}
\to -\infty$.)
\end{proof}

\subsection{Estimates for the eigenvalues}
\label{sec:fr:evals}
Using the similarity to a symmetric matrix that we have established in
the previous section we can now give sharp bounds on the spectrum of
$\Lqcfp$.

\begin{theorem}
\label{th:fr:evals}
Suppose that \eqref{eq:fr:b1_bound_ass} holds, then $\Lqcfp$ has a
real, ordered spectrum $(\lambda_j)_{j = 1}^{N}$. If we
denote the ordered eigenvalues of $\La $ and $\Lc$, respectively, by
$(\lambda_j^\a)_{j = 1}^{N}$ and $(\lambda_j^\c)_{j = 1}^{N}$
then
\begin{align*}
  &\lambda_j^\c \leq \lambda_j \leq \lambda_j^\a, \qquad \text{for } j = 1, \dots, N, \quad \text{if } b(t) \geq 0, \quad \text{and} \\
  &\lambda_j^\a \leq \lambda_j \leq \lambda_j^\c, \qquad \text{for } j = 1, \dots, N, \quad \text{if } b(t) \leq 0. \qedhere
\end{align*}
\end{theorem}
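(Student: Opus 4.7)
The approach is to exploit the similarity \eqref{eq:fr:defn_Lsym} established in the previous section, which realizes $\Lqcfp$ as similar to the symmetric operator
\begin{displaymath}
\Lsymm = \Lc + \sigma (LY_1) \Chi (LY_1)^T,
\end{displaymath}
with $\sigma \in \{+1,-1\}$ the sign of $b_1$ on $\Ts$. Similar matrices share their spectrum, and since $\Lsymm$ is real symmetric, the spectrum of $\Lqcfp$ is immediately real and can be ordered as $(\lambda_j)_{j=1}^N$.

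For the sandwich bounds, I would next express $\La$ in an analogous form. By the construction in Section \ref{sec:fr:symm}, $\La - \Lc = b(T) = \sigma\, p(T) p(T^{-1}) = \sigma Y Y^T = \sigma (LY_1)(LY_1)^T$, so
\begin{displaymath}
\La = \Lc + \sigma (LY_1)(LY_1)^T.
\end{displaymath}
Thus $\Lc$, $\Lsymm$, and $\La$ all arise from the same base $\Lc$ perturbed by $\sigma$ times the same outer product, differing only by the insertion or omission of the projection $\Chi$. The essential comparison is the operator inequality $0 \le \Chi \le I$, which holds because $\Chi$ is a diagonal $\{0,1\}$-valued orthogonal projection. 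Sandwiching by $LY_1$ preserves the order and yields
\begin{displaymath}
0 \;\le\; (LY_1) \Chi (LY_1)^T \;\le\; (LY_1)(LY_1)^T
\end{displaymath}
in the sense of quadratic forms on $\Rper$. Adding $\Lc$ and multiplying by $\sigma$ then gives $\Lc \le \Lsymm \le \La$ when $\sigma = +1$ (i.e.\ $b(t)\ge 0$ on $\Ts$), and the reversed chain $\La \le \Lsymm \le \Lc$ when $\sigma = -1$.

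Finally, since $\Lc, \Lsymm, \La$ are all real symmetric on $\Rper$, the Courant--Fischer min-max principle transfers the quadratic form inequalities into monotonicity of the ordered eigenvalues. Combined with the fact that $\Lsymm$ and $\Lqcfp$ share the same list of ordered eigenvalues by similarity, this produces the claimed bounds $\lambda_j^\c \le \lambda_j \le \lambda_j^\a$ or the reverse, according to the sign of $b(t)$ on $\Ts$. There is no serious technical obstacle beyond what has already been done to reach the symmetric similar form: the only conceptual point is recognizing that $\La - \Lc$ and $\Lsymm - \Lc$ differ exactly by the insertion of the order-preserving factor $\Chi$, which is precisely what makes min-max produce the desired sandwich.
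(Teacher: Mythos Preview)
Your proposal is correct and follows essentially the same route as the paper: both use the similarity of $\Lqcfp$ to the symmetric operator $\Lsymm$ from \eqref{eq:fr:defn_Lsym}, establish the quadratic-form sandwich $\Lc \le \Lsymm \le \La$ (or its reverse) via $0 \le \Chi \le I$ conjugated by $Y = LY_1$, and then invoke the Courant--Fischer min-max principle. The only difference is cosmetic---the paper writes out the comparison as $0 \le \langle \Chi Y^T u, \Chi Y^T u \rangle \le \langle Y^T u, Y^T u \rangle$ rather than as an operator inequality, but the content is identical.
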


\begin{proof}
We know from Theorem \ref{th:fr:condbnd_abstract} that $\Lqcfp$ is
diagonalizable and that it is similar to the self-adjoint operator
$\Lsymm$ defined in \eqref{eq:fr:defn_Lsym}, which has a real
spectrum that is identical to the spectrum of $\Lqcfp$. We will
next show that, for all $u \in \Rper$,
\begin{equation}
  \label{eq:fr:evals:1}
  \begin{split}
    \< \Lc u, u, \> \leq \< \Lsymm u, u \> \leq \< \La u, u \>, & \qquad 
    \text{if } \sigma  = 1,  \quad \text{and } \\
    \< \La u, u, \> \leq \< \Lsymm u, u \> \leq \< \Lc u, u \>, & \qquad 
    \text{if } \sigma  = -1.
  \end{split}
\end{equation}
From these inequalities, the min-max characterisation of eigenvalues
\cite[Sec. XIII.1]{ReedSimonIV} immediately gives the stated result. 

To prove \eqref{eq:fr:evals:1} we will take the following starting
point:
\begin{equation}
  \label{eq:fr:evals:5}
  \< \Lsymm u, u \>
  = \< \Lc u, u \> +  \sigma \< Y^T \Chi Y u, u \> 
  = \< \Lc u, u \>  +  \sigma \< \Chi Y^T u, \Chi Y^T u \>.
\end{equation}
From here on, we treat the cases $\sigma = 1$ and $\sigma = -1$
separately.

{\it Case 1: $\sigma = 1$. } Using \eqref{eq:fr:evals:5} and the
fact that $Y^T Y = \La - \Lc$, we have
\begin{displaymath}
  \< \Lsymm u, u \> \leq \< \Lc u, u \> + \< Y^T u, Y^T u \>
  = \< \Lc u, u \> + \< [\La - \Lc] u, u \>
  = \< \La u, u \>.
\end{displaymath}
For the lower bound we use \eqref{eq:fr:evals:5} and the fact that
$\< \Chi Y^T u, \Chi Y^T u \>$ is non-negative to obtain
\begin{displaymath}
  \< \Lsymm u, u \> \geq \< \Lc u, u \>.
\end{displaymath}

{\it Case 2: $\sigma = -1$. } The idea for $\sigma = -1$ is
essentially that the roles of $\La$ and $\Lc$ are reversed. Note
that, now, $Y^T Y = \Lc - \La$. Hence, for the upper bound, we get
\begin{displaymath}
   \< \Lsymm u, u \> \leq \< \Lc u, u \>,
\end{displaymath}
whereas, for the lower bound, we get
\begin{displaymath}
  \< \Lsymm u, u \> \geq \< \Lc u, u \> - \< Y^T u, Y^T u \>
  = \< \Lc u, u \> - \< [\Lc - \La] u, u \> 
  = \< \La u, u \>. \qedhere
\end{displaymath}
\end{proof}

\subsection{The case of non-positive coefficients}
\label{sec:fr:nonpos}
For many of the common interaction potentials, such as the
Lennard--Jones potential, $\phi(r) = A r^{-12} + Br^{-6}$, or the
Morse potential, $\phi(r) = \exp(-2\alpha(r-r_0)) + 2
\exp(-\alpha(r-r_0))$, there exists a minimal strain $F_*$ such that,
for all $F \geq F_*$, we have
\begin{equation}
\label{eq:fr:nonpos_cond}
\phi_{rF}'' \leq 0 \qquad \text{for} \quad r = 2, \dots, R.
\end{equation}
In most cases, it is reasonable to assume that the macroscopic strain
$F$ remains in this region, as it would require extreme compressive
forces to violate it. Hence, for the remainder of the section, we will
assume that \eqref{eq:fr:nonpos_cond} is satisfied.
The following two lemmas reduce this case to the one studied earlier in this section.

\begin{lemma}
\label{th:p-has-real-coeffs}
Suppose that \eqref{eq:fr:nonpos_cond} holds, then $b(t) \geq 0$ in $\Ts$.
\end{lemma}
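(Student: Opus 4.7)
The plan is to evaluate $b(t)$ on the unit circle $\Ts$ directly and reduce the non-negativity statement to a standard trigonometric inequality. Parametrising $t = e^{i\theta}$, we have $t^{-1} = \bar t$, so
\begin{displaymath}
  (t^r - 1)(t^{-r} - 1) = |t^r - 1|^2 = 4 \sin^2(r\theta/2),
\end{displaymath}
and similarly $(t-1)(t^{-1}-1) = 4\sin^2(\theta/2)$. Inserting these into the definition \eqref{eq:b-definition} yields
\begin{displaymath}
  b(e^{i\theta}) = 4 \sum_{r=2}^R \phi_{rF}'' \big[ \sin^2(r\theta/2) - r^2 \sin^2(\theta/2) \big].
\end{displaymath}
Since the coefficients $\phi_{rF}''$ are assumed non-positive, it suffices to show that each bracket is non-positive, i.e.\ that $|\sin(r x)| \leq r |\sin(x)|$ for every $x \in \R$ and every integer $r \geq 1$ (applied with $x = \theta/2$).

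The latter is an elementary inequality that I would prove by induction on $r$. The base case $r = 1$ is trivial. For the inductive step, I use the addition formula
\begin{displaymath}
  \sin((r+1)x) = \sin(rx) \cos(x) + \cos(rx) \sin(x),
\end{displaymath}
together with $|\cos| \leq 1$, to obtain $|\sin((r+1)x)| \leq |\sin(rx)| + |\sin(x)| \leq (r+1)|\sin(x)|$ by the inductive hypothesis. This immediately gives $\sin^2(r\theta/2) - r^2 \sin^2(\theta/2) \leq 0$, so each summand in the formula for $b(e^{i\theta})$ is the product of a non-positive $\phi_{rF}''$ and a non-positive bracket, hence non-negative. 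Summing yields $b(e^{i\theta}) \geq 0$ for every $\theta$, as required.

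There is really no major obstacle here; the entire argument is a one-line reduction to the standard bound $|\sin(rx)| \leq r |\sin x|$. The only subtlety is remembering to use $t^{-1} = \bar t$ on $\Ts$ so that the two factors in $(t^r-1)(t^{-r}-1)$ combine into a squared modulus, which is what makes the sign of each term transparent.
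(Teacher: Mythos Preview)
Your proof is correct and is essentially the same as the paper's: both reduce to the single inequality $|\sin(r\theta/2)| \leq r\,|\sin(\theta/2)|$, equivalently $|1+t+\dots+t^{r-1}|\leq r$ on $\Ts$. The only cosmetic difference is that the paper first factors $b(t) = (t-1)(t^{-1}-1)\sum_r \phi_{rF}''\big[|1+t+\dots+t^{r-1}|^2 - r^2\big]$ and then invokes the triangle inequality, whereas you parametrise trigonometrically from the outset and prove the equivalent bound by induction; the underlying argument is identical.
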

\begin{proof}
Similarly as in the proof of Lemma \ref{th:fr:defn_p1} we rewrite
$b(t)$ in the form
\begin{align}
  \notag
  b(t) =~& \sum_{r=2}^{R} \phi''_{rF} \big[(t^r-1)(t^{-r}-1) 
    - r^2 (t-1) (t^{-1}-1)\big] \\
  =~& (t-1)(t^{-1}-1) \sum_{r=2}^R \phi_{rF}'' \big[
  (t^{r-1}+t^{r-2}+\ldots+1) (t^{-r+1}+t^{-r+2}+\ldots+1) - r^2 \big].
   \label{eq:p-has-real-coeffs:intermediate_1}
\end{align}
It is easy to see that $(t-1)(t^{-1}-1)$ is non-negative on $\Ts$,
and moreover, for $t \in \Ts$,
\begin{equation}
  \label{eq:p-has-real-coeffs:intermediate_2}
  (t^{r-1}+t^{r-2}+\ldots+1) (t^{-r+1}+t^{-r+2}+\ldots+1) =
  |t^{r-1}+t^{r-2}+\ldots+1|^2 \leq r^2.
\end{equation}
Hence we obtain the stated result.
\end{proof}

\begin{lemma}
 \label{th:fr:nonpos_implicit_bound}
 Suppose that \eqref{eq:fr:nonpos_cond} holds and that
 $\phi_{RF}''<0$.  Then \eqref{eq:fr:b1_bound_ass} is satisfied with
 constants $\beta_0$ and $\beta_1$ that are independent of $N$ and
 $\As$.
\end{lemma}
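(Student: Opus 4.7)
The plan is to extract a second factor of $(t-1)(t^{-1}-1)$ from $b(t)$ so that $b_1(t)$ becomes an explicit non-negative sum on $\Ts$, and then read off both bounds directly. The starting point is the factorization already used in Lemma \ref{th:p-has-real-coeffs}, namely
\begin{displaymath}
b(t) = (t-1)(t^{-1}-1)\sum_{r=2}^R \phi_{rF}''\, g_r(t), \qquad
g_r(t) = q_r(t)q_r(t^{-1}) - r^2,
\end{displaymath}
where $q_r(t) = 1 + t + \ldots + t^{r-1}$. The one additional ingredient needed is the algebraic identity $g_r(t) = \sum_{m=1}^{r-1}(r-m)(t^m + t^{-m} - 2)$, which is straightforward to verify by expanding $q_r(t)q_r(t^{-1}) = \sum_{|m|\le r-1}(r-|m|) t^m$ and using $\sum_m (r-|m|) = r^2$. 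Combined with $t^m + t^{-m} - 2 = -(t^m-1)(t^{-m}-1) = -(t-1)(t^{-1}-1)\,q_m(t)q_m(t^{-1})$, this yields
\begin{displaymath}
b(t) = -\bigl[(t-1)(t^{-1}-1)\bigr]^2 \sum_{r=2}^R \phi_{rF}'' \sum_{m=1}^{r-1}(r-m)\, q_m(t)q_m(t^{-1}),
\end{displaymath}
and hence the explicit formula
\begin{displaymath}
b_1(t) = -\sum_{r=2}^R \phi_{rF}'' \sum_{m=1}^{r-1}(r-m)\, q_m(t)q_m(t^{-1}) \qquad \text{for } t \in \C \setminus \{0\}.
\end{displaymath}

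Once this representation is in place, the bounds follow immediately. On $\Ts$ one has $q_m(t^{-1}) = \overline{q_m(t)}$, so $q_m(t) q_m(t^{-1}) = |q_m(t)|^2 \geq 0$; combined with the assumption $\phi_{rF}'' \leq 0$ for all $r \geq 2$, every summand is non-negative, so $b_1(t) \geq 0$ on $\Ts$. To obtain the uniform positive lower bound $\beta_0^2 > 0$, I would isolate the contribution of the $r = R$, $m = 1$ term, which equals $-\phi_{RF}''(R-1)\, |q_1(t)|^2 = -\phi_{RF}''(R-1) > 0$ by the assumption $\phi_{RF}'' < 0$. Since all remaining summands are non-negative, this gives the explicit, $N$- and $\As$-independent bound $\beta_0^2 := -\phi_{RF}''(R-1)$.

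For the upper bound, the representation shows that $b_1$ is a Laurent polynomial with coefficients depending only on the $\phi_{rF}''$ and on $R$. It is therefore continuous on the compact set $\Ts$, and crudely $|q_m(t)| \leq m$ gives $\beta_1^2 := \sum_{r=2}^R |\phi_{rF}''| \sum_{m=1}^{r-1}(r-m)\, m^2$, again independent of $N$ and $\As$.

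The main obstacle is purely algebraic: one has to recognize that writing $g_r(t)$ as a sum of the elementary blocks $t^m + t^{-m} - 2$ exposes a second common factor of $(t-1)(t^{-1}-1)$, which then transforms a seemingly mysterious division into a manifestly non-negative, uniformly bounded expression. After that, the sign hypothesis on the $\phi_{rF}''$ and the assumption $\phi_{RF}'' \neq 0$ do all the work.
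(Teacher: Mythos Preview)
Your proof is correct and takes a genuinely different route from the paper's. The paper argues non-constructively: the upper bound comes from continuity of $b_1$ on the compact set $\Ts$, and the positive lower bound is obtained by contradiction, showing that $b_1(t_1)\le 0$ forces (via Lemma~\ref{th:p-has-real-coeffs}) $t_1=1$ and then checking directly that $b_1(1)=\sum_r(-\phi_{rF}'')\frac{r^4-r^2}{12}>0$. No explicit values of $\beta_0,\beta_1$ emerge from this argument. Your approach instead produces an explicit formula $b_1(t)=\sum_{r=2}^R(-\phi_{rF}'')\sum_{m=1}^{r-1}(r-m)\,|q_m(t)|^2$ on $\Ts$, from which both bounds are read off directly; isolating the $m=1$ term gives the clean lower bound $\beta_0^2=-\phi_{RF}''(R-1)$, and $|q_m(t)|\le m$ gives the upper bound. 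It is worth noting that your upper bound $\sum_{r}(-\phi_{rF}'')\sum_{m=1}^{r-1}(r-m)m^2$ in fact equals $\sum_r(-\phi_{rF}'')\frac{r^2(r^2-1)}{12}$, which is precisely the sharp value of Proposition~\ref{th:cor_beta_nonpos} (attained at $t=1$); so your algebraic representation recovers part of that proposition with almost no extra work, whereas the paper needs the separate and rather involved Lemma~\ref{lem:sin-r-beta-estimate}. The trade-off is that the paper's argument for the present lemma requires no new identity beyond what was already used in Lemma~\ref{th:p-has-real-coeffs}, while yours hinges on recognizing the decomposition $q_r(t)q_r(t^{-1})-r^2=\sum_{m=1}^{r-1}(r-m)(t^m+t^{-m}-2)$.
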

\begin{proof}
 The upper bound $\beta_1$ in \eqref{eq:fr:nonpos_cond} obviously
 exists since $b_1(t)$ is a continuous function on a compact set
 $\Ts$.

 To show the existence of the lower bound it is sufficient to show
 that
 \begin{equation}
   \label{eq:fr:nonpos_implicit_lower_bound}
   b_1(t)>0 \quad \text{for all } t\in\Ts.
 \end{equation}

 Suppose that $b_1(t_1)\le 0$ at some point $t_1\in\Ts$.  Then
 $b(t_1)\le 0$ and hence at least one term in the definition of
 $b(t)$ \eqref{eq:b-definition} is non-positive. To be precise, there
 exists $r \in \{2, \dots, R\}$ such that $\phi_{rF}'' < 0$ and
 \begin{displaymath}
   (-\phi_{rF}'') \big[r^2 (t_1-1) (t_1^{-1}-1) - (t_1^r-1)(t_1^{-r}-1)\big] \le 0
 \end{displaymath}
 It follows from \eqref{eq:p-has-real-coeffs:intermediate_1} and
 \eqref{eq:p-has-real-coeffs:intermediate_2} that this may only
 happen at $t_1 = 1$. However, a striaghtforward computation shows
 that
 \begin{displaymath}
   b_1(1) = \lim_{t\to 1} \frac{r^2 (t-1) (t^{-1}-1) - (t^r-1)(t^{-r}-1)}{
     \big[(t-1) (t^{-1}-1)\big]^2}  = \frac{r^4 - r^2}{12} > 0.
 \end{displaymath}
 Hence no point $t_1\in\Ts$ such that $b_1(t_1)\le 0$ exists.
\end{proof}

\noindent We are now in a position to complete the proof of Theorem
\ref{th:fr:mainthm_simple}.

\begin{proof}[Proof of Theorem \ref{th:fr:mainthm_simple}]
 Item (i) is a special case of Proposition
 \ref{th:fr:diag_Lqcf_Lqcfp}, taking into account that, for
 non-positive coefficients, $b(t)$ defined in \eqref{eq:b-definition}
 is non-negative and $Y_1$ can therefore be chosen to be real. Item
 (ii) follows from Theorem \ref{th:fr:condbnd_abstract} and
 Proposition \ref{th:fr:nonpos_implicit_bound}. Item (iii) is
 established in Theorem \ref{th:fr:evals}.
\end{proof}

\medskip \noindent We conclude this section with a result that gives
the sharp bounds on $\beta_0$, $\beta_1$ for the case of non-positive
coefficients.

\begin{proposition}
\label{th:cor_beta_nonpos}
Let $b_1(t)$ be defined by \eqref{eq:fr:defn_b1} and suppose that
\eqref{eq:fr:nonpos_cond} holds; then \eqref{eq:fr:b1_bound_ass}
holds with constants
\begin{align*}
  \beta_0^2 =~& \sum_{r=2}^R (-\phi_{rF}'') \frac{2 r^2 + (-1)^r - 1}{8},
  \qquad \text{and} \\
  \beta_1^2 =~& \sum_{r=2}^R (-\phi_{rF}'') \frac{r^2 (r^2 - 1)}{12}.
\end{align*}
The lower bound is attained at $t=-1$ and the upper bound is attained at $t=1$.
\end{proposition}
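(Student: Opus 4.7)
The plan is to reduce $b_1$ on the unit circle to a manifestly non-negative combination of per-$r$ quantities, and then to bound each of those by sharp Fej\'er-kernel and Chebyshev inequalities. For $t = e^{i\theta}\in\Ts$ one has $(t-1)(t^{-1}-1) = 4\sin^2(\theta/2)$; expanding the Fej\'er kernel $K_r(\theta) := \sin^2(r\theta/2)/\sin^2(\theta/2) = r + 2\sum_{m=1}^{r-1}(r-m)\cos(m\theta)$, applying $1-\cos(m\theta) = 2\sin^2(m\theta/2)$, and dividing the definition \eqref{eq:b-definition} of $b(t)$ by $[(t-1)(t^{-1}-1)]^2$ would yield the representation
\[
b_1(e^{i\theta}) = \sum_{r=2}^R (-\phi_{rF}'')\,f_r(\theta), \qquad
f_r(\theta) := \sum_{m=1}^{r-1}(r-m)\,K_m(\theta),
\]
where each $K_m(\theta) = |1 + e^{i\theta} + \cdots + e^{i(m-1)\theta}|^2 \ge 0$ and each weight $(-\phi_{rF}'') \ge 0$ by \eqref{eq:fr:nonpos_cond}. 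Direct evaluation at the endpoints gives $f_r(0) = \sum_{m=1}^{r-1}(r-m)m^2 = r^2(r^2-1)/12$ and $f_r(\pi) = \sum_{1\le m\le r-1,\,m\text{ odd}}(r-m) = (2r^2 + (-1)^r - 1)/8$, so the claimed constants $\beta_1^2$ and $\beta_0^2$ are precisely $b_1(1)$ and $b_1(-1)$. It therefore suffices to prove, for each fixed $r\ge 2$, the pointwise two-sided bound $f_r(\pi) \le f_r(\theta) \le f_r(0)$.

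The upper bound is immediate: the triangle inequality yields $|1+e^{i\theta}+\cdots+e^{i(m-1)\theta}| \le m$, whence $K_m(\theta) \le m^2 = K_m(0)$ and hence $f_r(\theta) \le f_r(0)$, with equality at $\theta = 0$. Summing with the non-negative weights $(-\phi_{rF}'')$ gives $b_1(t) \le \beta_1^2$, attained at $t = 1$.

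For the lower bound I would split on the parity of $r$. When $r = 2l$ is even, the identities $\sin(r\theta/2) = \sin(l\theta)$ and $\sin^2\theta = 4\sin^2(\theta/2)\cos^2(\theta/2)$ give $f_r(\theta) - r^2/4 = [l^2\sin^2\theta - \sin^2(l\theta)]/[4\sin^4(\theta/2)]$, whose non-negativity is the classical inequality $|\sin(l\theta)| \le l|\sin\theta|$ (equivalently, $|U_{l-1}(\cos\theta)| \le l$ for Chebyshev polynomials of the second kind). When $r = 2k+1$ is odd, the product-to-sum formula $\cos(r\theta) - \cos\theta = -2\sin((k+1)\theta)\sin(k\theta)$ together with the substitution $x = \cos\theta$ yields the Chebyshev factorization $T_r(x) - x = -2(1-x^2)U_k(x)U_{k-1}(x)$; plugging this into the difference gives, after a short algebraic rearrangement,
\[
f_r(\theta) - f_r(\pi) = \frac{(1+\cos\theta)\bigl[(r^2-1) - 4\,U_k(\cos\theta)\,U_{k-1}(\cos\theta)\bigr]}{4(1-\cos\theta)},
\]
and the sharp pointwise bounds $|U_k(x)| \le k+1$ and $|U_{k-1}(x)| \le k$ on $[-1,1]$ force $|U_k(x)U_{k-1}(x)| \le k(k+1) = (r^2-1)/4$, so the bracket is non-negative and vanishes exactly at $x = \pm 1$. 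Summing with the non-negative weights produces $b_1(t) \ge \beta_0^2$, attained at $t=-1$.

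The main obstacle is the odd-$r$ lower bound: it relies on the exact Chebyshev factorisation $T_r(x) - x = -2(1-x^2)U_k(x)U_{k-1}(x)$ and on recognising that the trivial product bound $|U_k U_{k-1}| \le k(k+1)$ is attained and equals precisely $(r^2-1)/4$. Any slacker estimate would fail to identify $\beta_0^2$ sharply or would displace the minimiser from $t = -1$.
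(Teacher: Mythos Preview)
Your argument is correct and takes a genuinely different route from the paper's. The paper substitutes $t=e^{2i\beta}$, reduces to the single function $g_r(\beta)=\sin^{-2}\beta\,\bigl(r^2-\sin^2(r\beta)/\sin^2\beta\bigr)$, and proves the two-sided bounds on $g_r$ by a fairly lengthy case analysis on the range of $\beta$ (three intervals for the lower bound, two for the upper, with further even/odd subcases), using ad hoc estimates such as $\sinc' (x)\ge -x/3$ and $\cos x\le \sinc^3 x$. Your decomposition $f_r(\theta)=\sum_{m=1}^{r-1}(r-m)K_m(\theta)$ via the Fej\'er-kernel expansion is more structural: non-negativity and the upper bound fall out in one line from $K_m(\theta)\le m^2$, and the lower bound is obtained from a clean parity split, with the even case reducing to $|\sin(l\theta)|\le l|\sin\theta|$ and the odd case to the Chebyshev factorisation $T_r(x)-x=-2(1-x^2)U_k(x)U_{k-1}(x)$ together with the sharp product bound $|U_k U_{k-1}|\le k(k+1)=(r^2-1)/4$. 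Your approach trades the paper's elementary but lengthy case analysis for standard special-function identities, yielding a shorter and arguably more transparent proof. One minor imprecision: the bracket $(r^2-1)-4U_k(x)U_{k-1}(x)$ vanishes only at $x=+1$ (at $x=-1$ it equals $2(r^2-1)$); the vanishing of $f_r(\theta)-f_r(\pi)$ at $\theta=\pi$ comes from the prefactor $(1+\cos\theta)$, not from the bracket. This does not affect the validity of the argument.
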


\medskip \noindent The proof of this proposition is based on the
following technical lemma.

\begin{lemma}
\label{lem:sin-r-beta-estimate}
For $r \in \N$, $r \geq 2$, let $f_r : (0, \frac{\pi}{2}] \to \R$ be
defined as
\begin{equation}
  \label{eq:f_r-definition}
  f_r(\beta) := \frac{1}{\sin^2\beta} \left(r^2 - 
    \frac{\sin^2 r\beta}{\sin^2\beta}\right),
\end{equation}
then
\begin{align}
  \label{eq:sin-r-beta-lower-estimate}
  \inf\limits_{0<|\beta|\le \frac{\pi}{2}} f_r(\beta)
  =&
  f_r(\pi/2) = 
  r^2-\frac{1-(-1)^r}{2}, \qquad \text{and}
  \\ \label{eq:sin-r-beta-upper-estimate}
  \sup\limits_{0<|\beta|\le \frac{\pi}{2}} f_r(\beta)
  =&
  \lim\limits_{\beta\to 0} f_r(\beta)
  =
  \frac{1}{3} r^2 (r^2-1).
\end{align}
\end{lemma}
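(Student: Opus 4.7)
The plan is to split off the two boundary evaluations from the two inequalities, with the main analytical content concentrated in one elementary estimate.

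The boundary values are direct. For $\beta=\pi/2$ one observes that $\sin^2(r\pi/2)$ equals $0$ when $r$ is even and $1$ when $r$ is odd, which gives $f_r(\pi/2)=r^2-(1-(-1)^r)/2$. For the limit as $\beta\to 0$, a second-order Taylor expansion of the numerator of $\sin^2 r\beta/\sin^2\beta$ yields $\sin^2 r\beta/\sin^2\beta = r^2 - r^2(r^2-1)\beta^2/3 + O(\beta^4)$, so dividing $r^2-\sin^2 r\beta/\sin^2\beta$ by $\sin^2\beta=\beta^2+O(\beta^4)$ gives $\lim_{\beta\to 0}f_r(\beta)=r^2(r^2-1)/3$.

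Both inequalities will rest on the Bernstein-type estimate
\begin{equation*}
  |\sin m\beta|\le m\,|\sin\beta| \qquad (m\in\N,\ \beta\in\R),
\end{equation*}
which follows from a one-line induction on $m$ using $\sin((m+1)\beta)=\sin m\beta\cos\beta+\cos m\beta\sin\beta$ together with $|\cos\beta|\le 1$. For the upper bound \eqref{eq:sin-r-beta-upper-estimate} I would start from the Fej\'er-kernel identity $\sin^2 r\beta/\sin^2\beta = r + 2\sum_{m=1}^{r-1}(r-m)\cos 2m\beta$, which, after using $r(r-1)=2\sum_{m=1}^{r-1}(r-m)$ to absorb constants, reorganises as $r^2-\sin^2 r\beta/\sin^2\beta = 4\sum_{m=1}^{r-1}(r-m)\sin^2 m\beta$. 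Dividing by $\sin^2\beta$ and invoking the Bernstein bound term by term yields
\begin{equation*}
  f_r(\beta) = 4\sum_{m=1}^{r-1}(r-m)\frac{\sin^2 m\beta}{\sin^2\beta} \le 4\sum_{m=1}^{r-1}(r-m)m^2 = \frac{r^2(r^2-1)}{3},
\end{equation*}
and this bound is saturated in the limit $\beta\to 0$ since $\sin m\beta/\sin\beta\to m$.

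For the lower bound \eqref{eq:sin-r-beta-lower-estimate} I would split on the parity of $r$. When $r=2k$ is even, the Bernstein bound applied with argument $2\beta$ gives $\sin^2 r\beta = \sin^2(k\cdot 2\beta) \le k^2\sin^2 2\beta = r^2\sin^2\beta\cos^2\beta$, hence $r^2\sin^2\beta-\sin^2 r\beta \ge r^2\sin^4\beta$, i.e.\ $f_r(\beta)\ge r^2=f_r(\pi/2)$. The odd case is the main obstacle; here I would use the product-to-sum identity
\begin{equation*}
  \sin^2 r\beta-\sin^2\beta=\sin((r-1)\beta)\sin((r+1)\beta),
\end{equation*}
which is a rewriting of $\sin(A-B)\sin(A+B)=\sin^2 A-\sin^2 B$. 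Since $r$ is odd, both $(r\pm 1)/2$ are positive integers, so two applications of Bernstein with argument $2\beta$ give $|\sin((r\pm 1)\beta)|\le\tfrac{r\pm 1}{2}|\sin 2\beta|$, and multiplying the two estimates yields $|\sin((r-1)\beta)\sin((r+1)\beta)|\le (r^2-1)\sin^2\beta\cos^2\beta$. Inserting back gives $\sin^2 r\beta\le\sin^2\beta\bigl(1+(r^2-1)\cos^2\beta\bigr)$, which rearranges to $r^2\sin^2\beta-\sin^2 r\beta\ge (r^2-1)\sin^4\beta$, i.e.\ $f_r(\beta)\ge r^2-1=f_r(\pi/2)$. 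In both parity cases equality is attained exactly at $\beta=\pi/2$, completing the identification of the infimum.
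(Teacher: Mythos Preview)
Your proof is correct and substantially cleaner than the paper's. Both arguments begin with the same boundary evaluations, but the analytical work diverges sharply.

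For the upper bound, the paper splits $(0,\pi/2]$ into $[\pi/r,\pi/2]$ and $(0,\pi/r)$; the first interval is handled by a crude estimate, while the second requires treating $r$ as a real parameter and bounding $\partial_r\bigl(f_r(\beta)/r^2\bigr)$ via the inequalities $\sinc'(x)\ge -x/3$ and $\cos x\le \sinc^3 x$. Your route through the Fej\'er-kernel identity
\[
r^2-\frac{\sin^2 r\beta}{\sin^2\beta}=4\sum_{m=1}^{r-1}(r-m)\sin^2 m\beta
\]
followed by a termwise application of $|\sin m\beta|\le m|\sin\beta|$ bypasses all of this and lands directly on the closed-form sum $4\sum_{m=1}^{r-1}(r-m)m^2=r^2(r^2-1)/3$. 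This is both shorter and more transparent as to why the supremum is attained at $\beta\to 0$.

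For the lower bound, the paper rewrites $f_r(\beta)=r^2+\frac{r^2}{\sin^4\beta}\bigl(\tfrac{\sin^2 2\beta}{4}-\tfrac{\sin^2 r\beta}{r^2}\bigr)$ and then treats three subintervals separately, with the region near $\pi/2$ further split by parity. Your argument replaces this by a single parity split on all of $(0,\pi/2]$: for even $r=2k$ one Bernstein bound with base angle $2\beta$ gives $\sin^2 r\beta\le r^2\sin^2\beta\cos^2\beta$, and for odd $r$ the identity $\sin^2 r\beta-\sin^2\beta=\sin((r-1)\beta)\sin((r+1)\beta)$ together with two Bernstein bounds (again with base angle $2\beta$, using that $(r\pm 1)/2\in\N$) yields $\sin^2 r\beta\le \sin^2\beta\bigl(1+(r^2-1)\cos^2\beta\bigr)$. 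The factor $\cos^2\beta$ that appears automatically is exactly what forces equality at $\beta=\pi/2$.

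What the paper's approach buys is perhaps a sense of where the extremum ``should'' be from the representation in terms of $\sin 2\beta$; what your approach buys is a proof that fits in a few lines and avoids any case analysis on subintervals of $\beta$.
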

\begin{proof}
{\it Proof of \eqref{eq:sin-r-beta-lower-estimate}.}
First, notice that
\begin{displaymath}
f_r\left(\pi/2\right) = r^2 - \sin^2\frac{r\pi}{2} = r^2 - \frac{1-(-1)^r}{2},
\end{displaymath}
which proves that the left-hand side of \eqref{eq:sin-r-beta-lower-estimate} is not larger than the right-hand side.
To prove the that $f_r(\beta)\ge f_r(\pi/2)$ for all $\beta$, transform
\begin{equation}\begin{array}{rll}
\label{eq:sin-r-beta-estimate-representation}
f_r(\beta)
=&\displaystyle
\frac{1}{\sin^2\beta} \left(r^2 - \frac{\sin^2 r\beta}{\sin^2\beta}\right)
=
\frac{1}{\sin^2\beta} \left(r^2 - r^2\sin^2\beta - \frac{\sin^2 r\beta}{\sin^2\beta}\right)
+r^2
\\[1em] =&\displaystyle
\frac{1}{\sin^2\beta} \left(r^2\cos^2\beta - \frac{\sin^2 r\beta}{\sin^2\beta}\right)
+r^2
=
\frac{1}{\sin^2\beta} \left(\frac{r^2 \sin^2 2\beta}{4\sin^2\beta} - \frac{\sin^2 r\beta}{\sin^2\beta}\right)
+r^2
\\[1em] &\displaystyle\hfill
=\frac{r^2}{\sin^4\beta} \left(\frac{\sin^2 2\beta}{4} - \frac{\sin^2 r\beta}{r^2}\right)
+r^2
\end{array}
\end{equation}
and consider the three cases: $0<\beta<\frac{\pi}{2r}$, $\frac{\pi}{2r}\le\beta\le\frac{\pi}{2}-\frac{\pi}{2r}$, and $\frac{\pi}{2}-\frac{\pi}{2r}<\beta\le 1$.

{\it Case 1.} ($0<\beta\le\frac{\pi}{2r}$)
Further transform the function $f_r(\beta)$ in \eqref{eq:sin-r-beta-estimate-representation}:
\begin{align*}
f_r(\beta)
=
\frac{r^2\beta^2}{\sin^4\beta} \left(\sinc^2 2\beta - \sinc^2 r\beta\right)
+r^2.
\end{align*}
The expression in the brackets is positive since $\sinc x = \frac{\sin x}{x}$ is a decreasing function for $0<x\le\pi/2$.
This proves $f_r(\beta)\ge r^2\ge f_r(\pi/2)$.

{\it Case 2.} ($\frac{\pi}{2r}\le\beta\le\frac{\pi}{r}-\frac{\pi}{2r}$)
In this case $\sin 2\beta\ge \frac{2}{r}$, hence
\begin{align*}
f_r(\beta)
=~&
\frac{1}{\sin^2\beta} \left(\frac{r^2 \sin^2 2\beta}{4\sin^2\beta} - \frac{\sin^2 r\beta}{\sin^2\beta}\right)
+r^2
\\ \ge~&
\frac{1}{\sin^2\beta} \left(\frac{1}{\sin^2\beta} - \frac{\sin^2 r\beta}{\sin^2\beta}\right)
+r^2
\ge r^2 \ge f_r(\pi/2).
\end{align*}

{\it Case 3.} ($\frac{\pi}{2}-\frac{\pi}{2r}<\beta\le 1$)
Denote $x=\frac{\pi}{2}-\beta$ ($0\le x<\frac{\pi}{2r}$) and consider the two different subcases: $r$ is even/odd.

{\it Case 3.1.} ($r$ is even)
In this case $\sin\beta = \cos x$,  $\sin 2\beta = \sin 2x$, and $\sin^2 r\beta = \sin^2(r x)$.
Hence $f_r(\theta)$ as expressed in \eqref{eq:sin-r-beta-estimate-representation} takes the form
\begin{align*}
f_r(\beta)
=~&
\frac{r^2}{\cos^4 x} \left(\frac{\sin^2 2x}{4} - \frac{\sin^2 rx}{r^2}\right)
+r^2
\\ =~&
\frac{r^2 x^2}{\cos^4 x} \left(\sinc^2 2x - \sinc^2 rx\right)
+r^2
\ge r^2 = f_r(\pi/2).
\end{align*}

{\it Case 3.2.} ($r$ is odd)
In this case $\sin\beta = \cos x$,  $\sin^2 2\beta = \sin^2 2x$, and $\sin^2 r\beta = \cos^2(r x)$.
Hence \eqref{eq:sin-r-beta-estimate-representation} transforms into 
\begin{align*}
f_r(\beta)
=~&
\frac{r^2}{\cos^4 x} \left(\frac{\sin^2 2x}{4} - \frac{\cos^2 rx}{r^2}\right)
+r^2
\ge
r^2-\left(\frac{\cos^2 x}{\cos rx}\right)^{-2}
=
r^2-\left(\frac{1+\cos 2x}{2 \cos rx}\right)^{-2}
\\ \ge~&
r^2-\left(\frac{1}{2} + \frac{1}{2}\right)^{-2}
= r^2-1 = f_r(\pi/2).
\end{align*}
Here we used the fact that $1\ge\cos 2x\ge\cos rx$ ($0\le x<\frac{\pi}{2r}$).

{\it Proof of \eqref{eq:sin-r-beta-upper-estimate}.}
First compute
\begin{align*}
\lim\limits_{\beta\to 0} f_r(\beta)
=~&
\lim\limits_{\beta\to 0}\frac{r^2 \sin^2 \beta - \sin^2 r\beta}{\sin^4\beta}
= \lim\limits_{\beta\to 0}\frac{r^2 \left(\beta^2-\beta^4/3\right) - \left(r^2 \beta^2-r^4 \beta^4/3\right)+ O(\beta^6)}{\beta^4}
\\ =~&
-\frac{r^2}{3}+\frac{r^4}{3}
= \frac{1}{3} r^2 (r^2-1).
\end{align*}
To prove the inequality $f_r(\beta) \le \frac{1}{3} r^2 (r^2-1)$ consider the two cases: $\frac{\pi}{r}\le\beta\le\frac{\pi}{2}$ and $0<\beta<\frac{\pi}{r}$.

{\it Case 1.} ($\frac{\pi}{r}\le\beta\le\frac{\pi}{2}$)
In this case \eqref{eq:sin-r-beta-upper-estimate} follows directly from the following computation:
\begin{align*}
\frac{1}{3} r^2 (r^2-1) \big[f_r(\beta)\big]^{-1}
\ge~&
\frac{1}{3} r^2 (r^2-1) \left[\frac{r^2}{\sin^2{\beta}}\right]^{-1}
=
\frac{1}{3} (r^2-1)\,\sin^2{\beta}
\\ \ge~&
\frac{1}{3} (r^2-1)\,\sin^2 \frac{\pi}{r}
\ge
\frac{1}{3} (r^2-1)\, \left(\frac{2}{r}\right)^2
=
\frac{4 r^2-4}{3 r^2}
\ge
\frac{4}{3} - \frac{1}{3} = 1.
\end{align*}

{\it Case 2.} ($0<\beta<\frac{\pi}{r}$)
We need to prove
\begin{equation}
\label{eq:sin-r-beta-upper-estimate:bound}
r^{-2} \left(f_r(\beta) - \frac{1}{3} r^2 (r^2-1)\right)
~=~
\frac{f_r(\beta)}{r^2} - \frac{1}{3} (r^2-1)
~\le~ 0
\qquad (\forall\beta\in(0,\pi/r))
\end{equation}
for integer $r$, but instead we prove that it is valid for all real values of $r\in [2,\infty)$.

First, notice that the following calculation
\begin{displaymath}
\left.\frac{f_r(\beta)}{r^2} - \frac{1}{3} (r^2-1)\right|_{r=2}
= \frac{1}{4}\,\frac{1}{\sin^2\beta} \left(4 - \frac{\sin^2 2\beta}{\sin^2\beta}\right)-1
=
\frac{1}{4 \sin^2\beta} \left(4 - 4 \cos^2\beta\right) - 1
= 0
\end{displaymath}
proves \eqref{eq:sin-r-beta-upper-estimate:bound} for $r=2$.
As is shown below,
\begin{equation}
\label{eq:sin-r-beta-upper-estimate:derivative-estimate}
\frac{\partial}{\partial r}\left(\frac{f_r(\beta)}{r^2} - \frac{1}{3} (r^2-1)\right) \le 0
\qquad (\forall\beta\in(0,\pi/r)),
\end{equation}
which concludes the proof of \eqref{eq:sin-r-beta-upper-estimate:bound}.

{\it Proof of \eqref{eq:sin-r-beta-upper-estimate:derivative-estimate}.}
We use the standard inequalities
\begin{align*}
\sinc'(x)\ge -x/3,
\qquad& \forall x\ge 0
\\
\cos x\le \sinc^3(x)
\qquad& \forall x\in [0,\pi]
\end{align*}
and obtain
\begin{align*}
\frac{\partial}{\partial r}\left(\frac{f_r(\beta)}{r^2}\right)
=~&
\frac{\partial}{\partial r}\left(\frac{1}{\sin^2\beta} - \frac{\sin^2{r \beta}}{r^2 \sin^4{\beta}}\right)
=
- \frac{\partial}{\partial r}\left(\frac{\beta^2 \sinc^2{r \beta}}{\sin^4{\beta}}\right)
\\ =~&
- \frac{2 \beta^3 \sinc{r \beta} \sinc'{r \beta}}{\sin^4\beta}
\le
\frac{2 \beta^3 \sinc{r \beta}\,(r \beta)}{3 \sin^4\beta},
\end{align*}
and hence
\begin{align*}
\frac{\partial}{\partial r}\left(\frac{f_r(\beta)}{r^2} - \frac{1}{3} (r^2-1)\right)
\le~&
\frac{2 \beta^3 \sinc{r \beta}\,(r \beta)}{3 \sin^4\beta} - \frac{2 r}{3}
=
\frac{2 r}{3} \left(\frac{\beta^4 \sinc{r \beta}}{\sin^4\beta}-1\right)
\\ \le~&
\frac{2 r}{3} \left(\frac{\beta^4 \sinc{2 \beta}}{\sin^4\beta}-1\right)
=
\frac{2 r}{3} \left(\frac{\beta^3 \cos{\beta}}{\sin^3\beta}-1\right)
\le 0.
\qedhere
\end{align*}
\end{proof}

\medskip 

\begin{proof}[Proof of Proposition \ref{th:cor_beta_nonpos}]

To reduce the problem to the statement of Lemma
\ref{lem:sin-r-beta-estimate} we make the substitution $\beta = e^{2 i \beta}$, $-\pi/2 < \theta < \pi/2$, which gives
\begin{align*}
  b(t) =~&
  \sum\limits_{r=2}^R (-\phi_{rF}'')\,
  \frac{r^2 (t-1) (t^{-1}-1) - (t^r-1) (t^{-r}-1)}{\big[(t-1) (t^{-1}-1)\big]^2}
  \\ =~&
  \sum\limits_{r=2}^R (-\phi_{rF}'')\,
  \frac{r^2 (e^{2i\beta}-1) (e^{-2i\beta}-1) - (e^{2ir\beta}-1) (e^{-2ir\beta}-1)}{\big[(e^{2i\beta}-1) (e^{-2i\beta}-1)\big]^2}
  \\ =~&
  \sum\limits_{r=2}^R (-\phi_{rF}'')\,
  \frac{r^2\, 4 \sin^2{\beta} - 4 \sin^2{r\beta}}{16 \sin^4{\beta}}
  =
  \sum\limits_{r=2}^R (-\phi_{rF}'')\,
  \frac{f_r(\beta)}{4}
  .
\end{align*}
Application of Lemma \ref{lem:sin-r-beta-estimate} gives Proposition
\ref{th:cor_beta_nonpos}.
\end{proof}

\section{Analysis of Preconditioned $\Lqcfp$ Operators}
\label{sec:prec}
In this final section we present two further interesting applications
of our foregoing analysis. First, we discuss the GMRES solution of a
linearized QCF system. We rigorously establish some conjectures used
in \cite{Dobson:qcf.iter} and briefly discuss their
consequences. Second, we prove a new stability result for the
linearized QCF operator in a discrete Sobolev norm, which we hope will
become a useful tool for future analyses of the QCF method.

We assume throughout this section that \eqref{eq:fr:p1_bounds} holds
and that $Y_1$ is defined by \eqref{eq:fr:new_Y1}. Moreover, we recall
the definition of $\Lsymm$ from \eqref{eq:fr:defn_Lsym}. Since the
results are fairly straightforward corollaries from our analysis in
Section \ref{sec:fr} we will derive them in a less formal manner.

\subsection{GMRES-Solution of the QCF system}
We consider the linearized QCF system
\begin{equation}
 \label{eq:prec:qcf_sys}
 \Lqcfp u = f,
\end{equation}
where $f \in \Us$, which is to be solved for $u \in \Us$. If this
system is solved using the GMRES algorithm (see \cite{saad03} for a
general introduction and \cite{Dobson:qcf.iter} for a detailed
discussion of using GMRES for solving the QCF system), then standard
estimates on GMRES convergence~\cite{saad03}, along with the analysis
of the previous sections, show that the residual of the $m$-th
iterate, $r^{(m)} = f - \Lqcfp u^{(m)}$, satisfies the bound
\begin{displaymath}
 \| r^{(m)} \| \leq 2\cond(\Vqcf)
 \bigg(\frac{1-\sqrt{\gamma}}{1+\sqrt{\gamma}}\bigg)^m \, \|r^{(0)}\|,
\end{displaymath}
where $\gamma = \lambda_2 / \lambda_N = O(1/N^2)$ (see also
\cite[Prop. 22]{Dobson:qcf.iter}). The fraction $\lambda_2/\lambda_N$
is used instead of $\lambda_1/\lambda_N$ since we are solving the
system in $\Us$. This convergence is rather slow and hence two
variants of preconditioned GMRES (P-GMRES) algorithms were suggested
in \cite{Dobson:qcf.iter}. The preconditioner used was the negative
Laplacian $L$. We will use the preconditioner $\Lmod$ instead of $L$,
however, this is purely for the sake of a consistent notation since
$\Lmod^{-1} \Lqcfp = L^{-1} \Lqcfp$ (note that $\range \Lqcfp = \Us$
and that $L^{-1}$ is well-defined on $\Us$).

The first variant of P-GMRES that was considered in
\cite{Dobson:qcf.iter} was the standard left-precon- ditioned GMRES
algorithm where GMRES is applied to the preconditioned system
\begin{equation}
 \label{eq:prec:leftprec_sys}
 \Lmod^{-1} \Lqcfp u = \Lmod^{-1} f.
\end{equation}
To obtain convergence rates, we require bounds on the eigenvalues and
eigenbasis of $\Lmod^{-1} \Lqcfp$ (see
\cite[Sec. 6.2]{Dobson:qcf.iter}).

The second variant considers again the left-preconditioned system
\eqref{eq:prec:leftprec_sys} but this time the residual is minimized
in the norm induced by the operator $\Lmod$. The convergence rates of
the resulting method are governed by the spectrum and eigenbasis of
the operator $\Lmod^{-1/2} \Lqcfp \Lmod^{-1/2}$ (see
\cite[Sec. 6.3]{Dobson:qcf.iter}).

\subsubsection{Diagonalization}
We consider $\Lmod^{-1/2} \Lqcfp \Lmod^{-1/2}$ first. Using
\eqref{eq:fr:defn_Lsym}, and the fact that $\Lmod^{-1/2}$ commutes
with $\Lmod Y_1$, we obtain
\begin{displaymath}
 [\Lmod Y_1] [\Lmod^{-1/2} \Lqcfp \Lmod^{-1/2}] [\Lmod Y_1]^{-1}
 = \Lmod^{-1/2} \Lsymm \Lmod^{-1/2} = \Wsymm \Lamtil (\Wsymm)^T,
\end{displaymath}
where $\Lamtil$ is the real diagonal matrix of eigenvalues and
$\Wsymm$ an orthonormal matrix of eigenvectors of $\Lmod^{-1/2} \Lsymm
\Lmod^{-1/2}$. Hence, we conclude that $\Lmod^{-1/2} \Lqcfp
\Lmod^{-1/2}$ is diagonalizable with real spectrum $\Lamtil$:
\begin{equation}
 \label{eq:prec:decomp_leftright}
 [\Lmod^{-1/2} \Lqcfp \Lmod^{-1/2}] [Y_1^{-1} \Lmod^{-1} \Wsymm] = 
[Y_1^{-1} \Lmod^{-1} \Wsymm] \Lamtil.
\end{equation}
Multiplying the equation by $\Lmod^{-1/2}$, we obtain
\begin{equation}
 \label{eq:prec:decomp_left}
 [\Lmod^{-1} \Lqcfp] [Y_1 \Lmod^{-3/2} \Wsymm]
 = [Y_1^{-1} \Lmod^{-3/2} \Wsymm] \Lamtil,
\end{equation}
thus showing that also $\Lmod^{-1} \Lqcfp$ is diagonalizable with the same
real spectrum $\Lamtil$. We note that this rigorously establishes a
variant of \cite[Conjecture 10]{Dobson:qcf.iter}.

\subsubsection{Condition number bounds}
Using the fact that $\Wsymm$ is orthogonal, and Lemma
\ref{th:fr:bounds_Y1} to bound $\cond(Y_1) \leq \beta_1/\beta_0$, we
can obtain the following upper bounds on the condition number of the
matrices of eigenvectors:
\begin{align}
 \label{eq:prec:cond_leftright}
 \cond(Y_1^{-1} \Lmod^{-1} \Wsymm) \lesssim~& N^2 \beta_1 / \beta_0, \quad \text{and} \\
 \label{eq:prec:cond_left}
 \cond(Y_1^{-1} \Lmod^{-3/2} \Wsymm) \lesssim~& N^{3/2} \beta_1 / \beta_0. 
\end{align}
This rigorously establishes \cite[Conjectures 12 and
13]{Dobson:qcf.iter}.

Since these bounds are not uniform in $N$ the question arises whether
we can define a better scaling for the eigenvectors to improve
them. Note, however, that \cite[Thm. 4.3]{Dobson:qcf.stab} implies
that $\cond(\Lqcfp) \gtrsim N^{1/2}$, and hence no choice of
eigenbasis can achieve an upper bound in
\eqref{eq:prec:cond_leftright} that is uniform in $N$. Moreover,
numerical experiments in \cite[Sec. 3, Figs. 2 and 3]{Dobson:qcf.iter}
indicate that our bounds may be optimal.

\subsubsection{Eigenvalue bounds}
To establish convergence rates for the P-GMRES solution of the QCF
system, we still need to obtain bounds on the eigenvalues contained in
$\Lamtil$. Let $(\tilde\lambda_n)_{n = 1}^N$ denote the ordered
eigenvalues of $\Lamtil$, and let $(\tilde\lambda_n^\a)_{n = 1}^N$ and
$(\tilde\lambda_n^\c)_{n = 1}^N$ denote, respectively, the ordered
eigenvalues of $\La$ and $\Lc$. Since $\Lc = W_F'' L$, we know that
\begin{displaymath}
 \tilde\lambda_1^\c = 0, \quad \text{and} \quad
 \tilde{\lambda}_n^\c = W_F'' \quad \text{for } n = 2, \dots, N.
\end{displaymath}
In view of Remark \ref{rem:sign_b}, replacing $u$ by $\Lmod^{-1/2}
u$ in the formulas, we obtain that
\begin{align*}
 \text{either}& \qquad \big\< [\Lmod^{-1/2} \La \Lmod^{-1/2}] u, u \big\>
 \geq \big\< [\Lmod^{-1/2} \Lc \Lmod^{-1/2}] u, u \big\> \qquad \forall u \in \Us,  \\
 \text{or}& \qquad \big\< [\Lmod^{-1/2} \La \Lmod^{-1/2}] u, u \big\>
 \leq \big\< [\Lmod^{-1/2} \Lc \Lmod^{-1/2}] u, u \big\> \qquad \forall u \in \Us.
\end{align*}
Hence, we can repeat the proof of Theorem \ref{th:fr:evals} verbatim to show that
\begin{displaymath}
 \min(\tilde\lambda_n^\a, \tilde\lambda_n^\c) \leq
 \tilde\lambda_n 
 \leq \max(\tilde\lambda_n^\a, \tilde\lambda_n^\c)
 \qquad \text{for } n = 1, \dots, N.
\end{displaymath}

At this point we need to make an assumption on the stability of the
atomistic system. We assume that the macroscopic strain $F$ is chosen
so that
\begin{equation}
 \label{eq:prec:stab_bnd_La}
  c_0 \|u'\|^2 \leq \< \La u, u \> \leq c_1 \|u'\|^2
  \qquad \forall u \in \Us.
\end{equation}
The upper bound can be obtained in a straightforward computation that
gives a constant $c_1$ depending only on the coefficients
$\phi_{rF}''$, $r = 1, \dots, R$. The lower bound means that the
homogeneous deformation $Fx$ lies in the region of stability of the
atomistic energy (see \cite{Dobson:qce.stab} for a detailed discussion
of this point, in particular, that $c_0$ is indeed independent of
$N$). For example, in the case of non-positive coefficients Remark
\ref{rem:sign_b} shows that this bound holds with $c_0 = W_F''$, and
that in the case $R = 2$ one can choose $c_0 = \min(W_F'', \phi_F'')$.

Upon noting that the stability assumption \eqref{eq:prec:stab_bnd_La}
is equivalent to the statement that $\tilde\lambda_2^\a \geq c_0$ and
$\tilde\lambda_N^\a \leq c_1$, we can now deduce that
\begin{displaymath}
 \min(c_0, W_F'') \leq \tilde\lambda_n \leq \max(c_1, W_F''),
\end{displaymath}
which are bounds that are independent of $N$ and $\As$.

\subsubsection{Convergence rates for P-GMRES}
From the foregoing discussion we obtain the following convergence
rates for the P-GMRES solution of \eqref{eq:prec:qcf_sys} (see
\cite[Sec. 6.2 and Sec. 6.3]{Dobson:qcf.iter} for details of these
derivations):

For the standard left-preconditioned GMRES algorithm we obtain bounds
on the preconditioned residual,
\begin{equation}
 \label{eq:prec:convrate_left}
 \| \Lmod^{-1} r^{(m)}\| \leq C N^3 q^m \|\Lmod^{-1} r^{(0)} \|,
\end{equation}
where $C > 0$ and $q \in (0, 1)$ are independent of $N$ and $\As$.

For the left-preconditioned P-GMRES algorithm, which minimizes the
preconditioned residual in the norm induced by $\Lmod$, we obtain
\begin{equation}
 \label{eq:prec:convrate_leftright}
 \| \Lmod^{-1/2} r^{(m)}\| \leq C N^2 q^m \|\Lmod^{-1/2} r^{(0)} \|,
\end{equation}
where $C \geq 0$ and $q \in (0, 1)$ are independent of $N$ and $\As$.

We also note that a finer analysis (see \cite[Sec. 6.2 and
6.3]{Dobson:qcf.iter}) shows that both variants of P-GMRES reduce the
residual to zero in at most $O(\#\As)$ iterations.

\subsection{Stability of $\Lqcfp$ in $\Us^{2,2}$}
We define the discrete Sobolev-type norm
\begin{displaymath}
 \| u \|_2 = \| L u \| \qquad \text{for } u \in \Rper,
\end{displaymath}
which is a norm on the space $\Us$ of mean-zero functions and denote
the space $\Us$ equipped with $\|\cdot\|_2$ by $\Us^{2,2}$. Moreover,
we denote the space $\Us$ equipped with the norm $\|\cdot\| =:
\|\cdot\|_0$ by $\Us^{0,2}$.  We are interested in the question
whether $\Lqcfp : \Us^{2,2} \to \Us^{0,2}$ is stable, uniformly in $N$
and $\As$.

To begin with, we note that
\begin{displaymath}
 \| (\Lqcfp)^{-1} \|_{L(\Us^{0,2}, \Us^{2,2})}^{-1} 
 = \inf_{u \in \Us \setminus \{0\}} \frac{\| \Lqcfp u \|_0}{\|Lu\|_0}
 = \inf_{f \in \Us \setminus \{0\}} \frac{ \| \Lqcfp \Lmod^{-1} f \|_0}{\|f\|_0};
\end{displaymath}
thus, the question reduces to the analysis of the
operator $\Lqcfp \Lmod^{-1}$. Using the representation
\begin{displaymath}
 \Lqcfp = \Lc + \sigma \PU \Chi (\La - \Lc) = W_F'' L + \sigma \PU \Chi Y_1^T Y_1 L^2,
\end{displaymath}
where $Y_1$ is defined in \eqref{eq:fr:new_Y1}, we obtain
\begin{displaymath}
 \Lqcfp \Lmod^{-1} = W_F'' \PU + \sigma \PU \Chi Y_1^T Y_1 L.
\end{displaymath}
We now argue similar as in the proof of Theorem
\ref{th:fr:bounds_Y1}. On the space $\Us$ the operator $W_F'' \PU +
\PU \Chi Y_1^T Y_1 L$ can be replaced by
\begin{displaymath}
 W_F'' I + \sigma \PU \Chi Y_1^T Y_1 L.
\end{displaymath}
For this modified operator Lemma \ref{th:fr:condell2_lowerbd} shows
that it is invertible and provides uniform bounds on the
inverse. Restricting the argument back to $\Us$ we obtain the
following theorem.

\begin{theorem}
 \label{th:prec:U22stab}
 Suppose that $W_F'' > 0$, that \eqref{eq:fr:b1_bound_ass} holds, and
 that $\sigma\beta_1^2/W_F'' > -1/4$; then $\Lqcfp$ is invertible and
 $\| (\Lqcfp)^{-1} \|_{L(\Us^{0,2}, \Us^{2,2})}$ is bounded above by
 a constant that depends only on $W_F'', \beta_0, \beta_1$ (that is,
 on the coefficients $\phi_{rF}'', r = 1, \dots, R$) but is
 independent of $N$ and of the choice of $\As$.
\end{theorem}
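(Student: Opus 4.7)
The plan is to reduce the $\Us^{2,2} \to \Us^{0,2}$ stability of $\Lqcfp$ to a uniform lower bound for the operator $\Lqcfp \Lmod^{-1}$ on $\Us$, and then apply Lemma \ref{th:fr:condell2_lowerbd} with $Z = Y_1$. First, since $Lu = \Lmod u$ for $u \in \Us$, the definition of the operator norm gives
\begin{displaymath}
\| (\Lqcfp)^{-1} \|_{L(\Us^{0,2}, \Us^{2,2})}^{-1}
= \inf_{u \in \Us\setminus\{0\}} \frac{\|\Lqcfp u\|}{\|Lu\|}
= \inf_{f \in \Us\setminus\{0\}} \frac{\|\Lqcfp \Lmod^{-1} f\|}{\|f\|},
\end{displaymath}
where in the last equality I substituted $f = \Lmod u = Lu$ and used that $\Lmod : \Us \to \Us$ is a bijection. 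So I need a uniform lower bound for $\Lqcfp \Lmod^{-1}$ on $\Us$.

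Second, starting from the representation of $\Lqcfp$ used in Section \ref{sec:fr},
\begin{displaymath}
\Lqcfp = \Lc + \sigma \PU \Chi Y_1^T Y_1 L^2 = W_F'' L + \sigma \PU \Chi Y_1^T Y_1 L^2,
\end{displaymath}
and using $L\Lmod^{-1} = \PU$ together with the fact that $Y_1$, being a polynomial in $T$, commutes with $\Lmod$ and $\PU$, I obtain
\begin{displaymath}
\Lqcfp \Lmod^{-1} = W_F'' \PU + \sigma \PU \Chi Y_1^T Y_1 L.
\end{displaymath}

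Third, for $f \in \Us$ one has $\PU f = f$, so on $\Us$ the operator $\Lqcfp \Lmod^{-1}$ acts exactly like
\begin{displaymath}
W_F'' \bigl[I - \alpha \PU \Chi Y_1^T L Y_1\bigr] \qquad \text{with } \alpha = -\sigma/W_F'',
\end{displaymath}
where I have used again that $Y_1$ commutes with $L$ to move one factor of $Y_1$ to the right. This is precisely the form of the operator $A$ in Lemma \ref{th:fr:condell2_lowerbd} with $Z = Y_1$. By Lemma \ref{th:fr:bounds_Y1} we have $\|Y_1\|\leq \beta_1$, so the hypothesis $\alpha\|Z\|^2 < 1/4$ of that lemma becomes exactly the assumed condition $\sigma\beta_1^2/W_F'' > -1/4$. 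The lemma then yields a constant $\gamma_0 > 0$ depending only on $\alpha\beta_1^2$ (hence only on the coefficients $\phi_{rF}''$) such that $\|A^T v\| \geq \gamma_0 \|v\|$ for all $v \in \Rper$, and therefore $\|A v\| \geq \gamma_0 \|v\|$ for all $v$.

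The only genuinely delicate point is the bookkeeping between the full ambient space $\Rper$ (on which the lemma is stated) and the subspace $\Us$ (on which we need the estimate). To handle this, I would observe that on the complementary line $\lspan\{e\}$ the correction term vanishes: $Le = 0$ implies $\PU \Chi Y_1^T L Y_1 e = 0$, so the operator $I - \alpha \PU \Chi Y_1^T L Y_1$ preserves the orthogonal decomposition $\Rper = \Us \oplus \lspan\{e\}$. Consequently the bound from Lemma \ref{th:fr:condell2_lowerbd} restricts cleanly to $\Us$, giving
\begin{displaymath}
\|\Lqcfp \Lmod^{-1} f\| \geq W_F'' \gamma_0 \|f\| \qquad \forall f \in \Us,
\end{displaymath}
with $\gamma_0$ independent of $N$ and $\As$. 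Combined with the first step, this is the desired uniform bound. I expect no further obstacle; the entire content of the theorem is really a clean corollary of the representation theory developed in Section \ref{sec:fr} together with the coercivity estimate of Lemma \ref{th:fr:condell2_lowerbd}.
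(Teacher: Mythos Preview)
Your proof is correct and follows exactly the paper's approach: reduce to a lower bound for $\Lqcfp\Lmod^{-1}$ on $\Us$, rewrite it as $W_F''[I-\alpha\PU\Chi Y_1^T L Y_1]$, and invoke Lemma~\ref{th:fr:condell2_lowerbd} with $Z=Y_1$. Your explicit handling of the passage from $\|A^T v\|\ge\gamma_0\|v\|$ to $\|Av\|\ge\gamma_0\|v\|$ and of the $\Us\oplus\lspan\{e\}$ decomposition fills in details the paper leaves implicit, but the argument is the same.
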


\section{Conclusion}
We have established a comprehensive $\ell^2$-theory of a linearized
force-based quasicontinuum (QCF) operator making several conjectures
from previous work \cite{Dobson:qcf.iter,Dobson:qcf.stab} regarding
its spectrum and eigenbasis rigorous (at least up to a modification of
the boundary conditions). We have given elementary derivations in the
case of next-nearest neighbour interactions but have also provided
proofs for arbitrary finite range interactions. Finally, as an
immediate corollary of our analysis we have also obtained a new
stability estimate in the space $\Us^{2,2}$.

Our results heavily use the fact that the nonlinear QCF operator is
linearized at a homogeneous deformation and a question of immediate
relevance is whether our results can be generalized, at least
partially, to linearizations around non-uniform states. Even in small
neighbourhoods of homogeneous deformations it is unclear whether this
can be done. 

Secondly, a generalization to 2D or 3D would have immense consequences
as no approach to the analysis of QCF in 2D or 3D exists at this point.

\end{document}